\def\baro{\vskip  .2truecm\hfill \hrule height.5pt \vskip  .2truecm}
\def\barba{\vskip -.1truecm\hfill \hrule height.5pt \vskip .4truecm}
\newcommand{\pcite}[1]{\citeauthor{#1}'s \citeyearpar{#1}}
\newcommand{\sumn}{\sum_{i=1}^{n}}
\newtheorem{proposition}{Proposition}
\newtheorem{corollary}{Corollary}
\newtheorem{lemma}{Lemma}
\newtheorem{theorem}{Theorem}
\newtheorem{remark}{Remark}
\newcommand{\X}{{\mathsf{X}}}
\begin{document}

\title{Trace-class Monte Carlo Markov Chains for Bayesian Multivariate
  Linear Regression with Non-Gaussian Errors} \author{Qian Qin and
  James P. Hobert \\ Department of Statistics \\ University of
  Florida} \date{January 2016}

\keywords{Compact operator, Data augmentation algorithm, Haar PX-DA
  algorithm, Heavy-tailed distribution, Scale mixture, Markov
  operator, Trace-class operator}

\maketitle

\begin{abstract}
  Let $\pi$ denote the intractable posterior density that results when
  the likelihood from a multivariate linear regression model with
  errors from a scale mixture of normals is combined with the standard
  non-informative prior.  There is a simple data augmentation
  algorithm (based on latent data from the mixing density) that can be
  used to explore $\pi$.  Let $h(\cdot)$ and $d$ denote the mixing
  density and the dimension of the regression model, respectively.
  \cite{hobe:jung:khar:qin:2016} have recently shown that, if $h$
  converges to 0 at the origin at an appropriate rate, and
  $\int_0^\infty u^{\frac{d}{2}} \, h(u) \, du < \infty$, then the
  Markov chains underlying the DA algorithm and an alternative Haar
  PX-DA algorithm are both geometrically ergodic.  In fact, something
  much stronger than geometric ergodicity often holds.  Indeed, it is
  shown in this paper that, under simple conditions on $h$, the Markov
  operators defined by the DA and Haar PX-DA Markov chains are
  \textit{trace-class}, i.e., compact with summable eigenvalues.  Many
  of the mixing densities that satisfy \pcite{hobe:jung:khar:qin:2016}
  conditions also satisfy the new conditions developed in this paper.
  Thus, for this set of mixing densities, the new results provide a
  substantial strengthening of \pcite{hobe:jung:khar:qin:2016}
  conclusion without any additional assumptions.  For example,
  \cite{hobe:jung:khar:qin:2016} showed that the DA and Haar PX-DA
  Markov chains are geometrically ergodic whenever the mixing density
  is generalized inverse Gaussian, log-normal, Fr\'{e}chet (with shape
  parameter larger than $d/2$), or inverted gamma (with shape
  parameter larger than $d/2$).  The results in this paper show that,
  in each of these cases, the DA and Haar PX-DA Markov operators are,
  in fact, trace-class.
\end{abstract}

\section{Introduction}
\label{sec:intro}

Consider the multivariate linear regression model
\begin{equation}
  \label{eq:mreg2}
  Y = X \beta + \varepsilon \Sigma^{\frac{1}{2}} \;,
\end{equation}
where $Y$ denotes an $n \times d$ matrix of responses, $X$ is an $n
\times p$ matrix of known covariates, $\beta$ is a $p \times d$ matrix
of unknown regression coefficients, $\Sigma^{\frac{1}{2}}$ is an
unknown positive-definite scale matrix, and $\varepsilon$ is an $n
\times d$ matrix whose rows are iid random vectors from a scale
mixture of multivariate normal densities.  In particular, letting
$\varepsilon_i^T$ denote the $i$th row of $\varepsilon$, we assume
that $\varepsilon_i$ has density
\begin{equation*}
  f_h(\varepsilon) = \int_{0}^{\infty} \frac{u^{\frac{d}{2}}}{(2
    \pi)^{\frac{d}{2}}} \, \exp \Big \{ -\frac{u}{2}
  \varepsilon^T \varepsilon \Big \} h(u) \, du \;,
\end{equation*}
where $h: (0,\infty) \rightarrow [0,\infty)$ is the so-called
\textit{mixing density}.  Error densities of this form are often used
when heavy-tailed errors are required.  For example, it is well known
that if $h$ is a $\mbox{Gamma}(\frac{\nu}{2}, \frac{\nu}{2})$ density
(with mean 1), then $f_h$ becomes the multivariate Student's $t$
density with $\nu$ degrees of freedom.

A Bayesian analysis of the data from this regression model requires a
prior on $(\beta,\Sigma)$.  We consider an improper default prior that
takes the form $\omega (\beta , \Sigma) \propto |\Sigma|^{-a} \,
I_{{\cal S}_d}(\Sigma)$ where ${\cal S}_d \subset
\mathbb{R}^{\frac{d(d + 1)}{2}}$ denotes the space of $d \times d$
positive definite matrices.  Taking $a=(d+1)/2$ yields the
independence Jeffreys prior, which is the standard non-informative
prior for multivariate location scale problems.  Of course, whenever
an improper prior is used, one must check that the corresponding
posterior distribution is proper.  Letting $y$ denote the observed
value of $Y$, the joint density of the data from model
\eqref{eq:mreg2} can be expressed as
\begin{equation*}
  f (y | \beta, \Sigma) = \prod_{i=1}^n \Bigg[ \int_{0}^{\infty}
  \frac{u^{\frac{d}{2}}}{(2\pi)^{\frac{d}{2}} |\Sigma|^{\frac{1}{2}}}
  \exp \bigg\{ -\frac{u}{2} \Big(y_i - \beta^T x_i\Big)^T
  \Sigma^{-1}\Big(y_i - \beta^T x_i\Big) \bigg\} h(u) \, du \Bigg] \;.
\end{equation*}
Define
\[
m(y) = \int_{{\cal S}_d} \int_{\mathbb{R}^{p \times d}}
f(y|\beta,\Sigma) \, \omega(\beta,\Sigma) \, d\beta \, d\Sigma \;.
\]
The posterior distribution is proper precisely when $m(y) < \infty$.
Let $\Lambda$ stand for the $n \times (p+d)$ matrix $(X:y)$.
Straightforward arguments (using ideas from \citet{fern:stee:1999})
show that, together, the following four conditions are sufficient for
posterior propriety:
\begin{enumerate}
  \item[($S1$)] $\mbox{rank}(\Lambda) = p+d \;;$
  \item[($S2$)] $n > p + 2d - 2a \;;$
  \item[($S3$)] $\int_0^\infty u^{\frac{d}{2}} \, h(u) \, du < \infty \;;$
  \item[($S4$)] $\int_0^\infty u^{-\frac{n-p+2a-2d-1}{2}} \, h(u) \,
    du < \infty \;.$
\end{enumerate}
These four conditions are assumed to hold throughout this paper.

\begin{remark}
  Conditions ($S1$) \& ($S2$) are known to be necessary for posterior
  propriety \citep{fern:stee:1999,hobe:jung:khar:qin:2016}.
\end{remark}

\begin{remark}
  Condition ($S3$) clearly concerns the tail behavior of $h$.
  Similarly, condition ($S4$) concerns the behavior of $h$ near the
  origin, unless $n-p+2a-2d-1$ is negative, which is possible.  Note,
  however, that ($S2$) implies that $-(n-p+2a-2d-1)/2 < 1/2$.
  Consequently, if $n-p+2a-2d-1$ is negative, then ($S4$) is implied
  by ($S3$).
\end{remark}

Of course, the posterior density of $(\beta,\Sigma)$ given the data
takes the form
\[
\pi(\beta, \Sigma|y) = \frac{f(y|\beta,\Sigma) \,
  \omega(\beta,\Sigma)}{m(y)} \;.
\]
There is a well-known data augmentation (DA) algorithm that can be
used to explore this intractable density \citep{liu:1996}.
\citet{hobe:jung:khar:qin:2016} (hereafter HJK\&Q) performed
convergence rate analyses of the Markov chains underlying this DA
algorithm and an alternative Haar PX-DA algorithm.  In this paper, we
provide a substantial improvement of HJK\&Q's main result.  A formal
statement of the DA algorithm requires some buildup.  Let $z =
(z_1,\dots,z_n)$ have strictly positive elements, and let $Q=Q(z)$ be
the $n \times n$ diagonal matrix whose $i$th diagonal element is
$z_i^{-1}$.  Also, define $\Omega = (X^T Q^{-1} X)^{-1}$ and $\mu =
(X^T Q^{-1} X)^{-1} X^T Q^{-1} y$.  For each $s \ge 0$, define a
univariate density as follows
\begin{equation}
  \label{eq:psi}
  \psi(u;s) = b(s) \, u^{\frac{d}{2}} \, e^{ -\frac{s u}{2}} \, h(u) \;,
\end{equation}
where $b(s)$ is the normalizing constant.  The DA algorithm uses draws
from the inverse Wishart ($\mbox{IW}_d$) and matrix normal
($\mbox{N}_{p,d}$) distributions.  These densities are defined in the
Appendix.  If the current state of the DA Markov chain is
$(\beta_m,\Sigma_m) = (\beta,\Sigma)$, then we simulate the new state,
$(\beta_{m+1},\Sigma_{m+1})$, using the following three-step
procedure.

\baro \vspace*{2mm}
\noindent {\rm Iteration $m+1$ of the DA algorithm:}
\begin{enumerate}
\item Draw $\{Z_i\}_{i=1}^n$ independently with $Z_i \sim \psi \Big(
  \cdot \;; \big( \beta^T x_i - y_i \big)^T \Sigma^{-1} \big( \beta^T
  x_i - y_i \big) \Big)$, and call the result $z = (z_1,\dots,z_n)$.
\item Draw
\[
\Sigma_{m+1} \sim \mbox{IW}_d \bigg( n-p+2a-d-1, \Big( y^T Q^{-1} y -
  \mu^T \Omega^{-1} \mu \Big)^{-1} \bigg)  \;.
\]
\item Draw $\beta_{m+1} \sim \mbox{N}_{p,d} \big( \mu, \Omega,
  \Sigma_{m+1} \big)$ \vspace*{-2.5mm}
\end{enumerate}
\barba

\noindent Denote the DA Markov chain by $\Phi =
\{(\beta_m,\Sigma_m)\}_{m=0}^\infty$, and its state space by $\X :=
\mathbb{R}^{p \times d} \times {\cal S}_d$.  For positive integer $m$,
let $k^m: \X \times \X \rightarrow (0,\infty)$ denote the $m$-step
Markov transition density (Mtd) of $\Phi$, so that if $A$ is a
measurable set in $\X$,
\[
P \Big((\beta_m,\Sigma_m) \in A \, \big| \, (\beta_0,\Sigma_0) =
(\beta,\Sigma) \Big) = \int_A k^m\big( (\beta',\Sigma') \big|
(\beta,\Sigma) \big) \, d\beta' \, d\Sigma' \;.
\]
The 1-step Mtd, $k \equiv k^1$, can be expressed as
\[
k\big( (\beta',\Sigma') \big| (\beta,\Sigma) \big) =
\int_{\mathbb{R}^n_+} \pi(\beta'|\Sigma',z,y) \, \pi(\Sigma'|z,y) \,
\pi(z|\beta,\Sigma,y) \, dz \;,
\]
where the precise forms of the conditional densities
$\pi(z|\beta,\Sigma,y)$, $\pi(\Sigma|z,y)$, and
$\pi(\beta|\Sigma,z,y)$ can be gleaned from steps 1., 2., and 3. of
the DA algorithm, respectively.  If there exist $M: \X \rightarrow
[0,\infty)$ and $\lambda \in [0,1)$ such that, for all $m$,
\begin{equation*}
  \int_{{\cal S}_d} \int_{\mathbb{R}^{p \times d}} \Big| k^m \big(
  \beta,\Sigma \big| \tilde{\beta},\tilde{\Sigma} \big) -
  \pi(\beta,\Sigma \big| y) \Big| \, d\beta \, d\Sigma \le
  M(\tilde{\beta},\tilde{\Sigma}) \, \lambda^m \;,
\end{equation*}
then the chain $\Phi$ is \textit{geometrically ergodic}.  The benefits
of using a geometrically ergodic Monte Carlo Markov chain have been
well documented \citep[see,
e.g.][]{robe:rose:1998,jone:hobe:2001,fleg:hara:jone:2008}.  HJK\&Q
showed that, if $h$ converges to zero at the origin at an appropriate
rate, then $\Phi$ is geometrically ergodic.  In order to state
HJK\&Q's result, we must introduce three classes of mixing densities.
Let $h: (0,\infty) \rightarrow [0,\infty)$ be a mixing density.  If
there is an $\eta>0$ such that $h(u)=0$ for all $u \in (0,\eta)$, then
we say that $h$ is \textit{zero near the origin}.  Now assume that $h$
is strictly positive in a neighborhood of 0.  If there exists a $c>-1$
such that
\[
\lim_{u \rightarrow 0} \frac{h(u)}{u^c} \in (0,\infty) \;,
\]
then we say that $h$ is \textit{polynomial near the origin} with power
$c$.  Finally, if for every $c>0$, there exists an $\eta_c>0$ such
that the ratio $\frac{h(u)}{u^c}$ is strictly increasing in
$(0,\eta_c)$, then we say that $h$ is \textit{faster than polynomial
  near the origin}.  HJK\&Q showed that every mixing density that is a
member of a standard parametric family is in one of these three
classes, and they proved the following result.

\begin{theorem}[HJK\&Q]
  \label{thm:qh}
  If the mixing density, $h$, is zero near the origin, or faster than
  polynomial near the origin, or polynomial near the origin with power
  $c > \frac{n-p+2a-d-1}{2}$, then the DA Markov chain is
  geometrically ergodic.
\end{theorem}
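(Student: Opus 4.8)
The plan is to establish geometric ergodicity by the classical drift-and-minorization method (Rosenthal; Jones and Hobert). Since the Markov operator of a DA algorithm is positive and self-adjoint on $L^2(\pi)$, and the chain $\Phi$ is $\phi$-irreducible, aperiodic and Harris (the conditional densities in steps 1--3 are everywhere positive), it suffices to exhibit a drift function $V : \X \to [0,\infty)$ together with constants $\gamma \in [0,1)$ and $L < \infty$ such that
\[
(PV)(\beta,\Sigma) \;:=\; \Ex\big[\, V(\beta_{m+1},\Sigma_{m+1}) \,\big|\, (\beta_m,\Sigma_m) = (\beta,\Sigma) \,\big] \;\le\; \gamma\, V(\beta,\Sigma) + L \;,
\]
and to pair it with a minorization condition on each sublevel set $\{V \le r\}$.

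For the drift function a natural candidate is built from the standardized residuals that drive step 1: writing $s_i(\beta,\Sigma) = (\beta^T x_i - y_i)^T \Sigma^{-1} (\beta^T x_i - y_i)$, I would try $V(\beta,\Sigma) = \sum_{i=1}^n s_i(\beta,\Sigma)$, likely augmented by terms such as $\trace(\Sigma^{-1})$ and $\log|\Sigma|$ to control excursions of $\Sigma$ toward the boundary of ${\cal S}_d$. I would evaluate $(PV)$ in two stages mirroring the algorithm. First, conditionally on the latent vector $z$, steps 2 and 3 are just the inverse-Wishart and matrix-normal draws of a weighted Gaussian regression, so $\Ex[\,V(\beta_{m+1},\Sigma_{m+1}) \mid z\,]$ can be written in closed form from the moment formulas of those distributions; it will be a function of the weighted cross-product matrices $X^T Q^{-1} X$ and $y^T Q^{-1} y - \mu^T \Omega^{-1} \mu$ and of the inverse-Wishart degrees of freedom $n-p+2a-d-1$. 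Second, I would take the expectation of this expression over $z \sim \prod_i \psi(\cdot\,; s_i(\beta,\Sigma))$.

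The crux, and the step I expect to be the main obstacle, is the second stage, which requires sharp control of the moments of $\psi(\cdot\,;s)$ as $s \to \infty$; this is exactly where the three classes of mixing density enter. Because $\psi(u;s) \propto u^{d/2} e^{-su/2} h(u)$, a large residual $s$ concentrates the law of $Z_i$ near the origin, so the relevant asymptotics are governed by the behavior of $h$ at $0$. A Laplace-type estimate should give, in the polynomial case $h(u) \sim u^c$,
\[
\Ex[\,Z_i \mid s\,] \;=\; \frac{\int_0^\infty u^{\frac{d}{2}+1}\, e^{-su/2}\, h(u)\, du}{\int_0^\infty u^{\frac{d}{2}}\, e^{-su/2}\, h(u)\, du} \;\sim\; \frac{d+2c+2}{s} \qquad (s \to \infty)\;,
\]
so that $s_i\,\Ex[\,Z_i \mid s_i\,]$ approaches a constant linear in $c$; the cases of $h$ zero near the origin or faster than polynomial near the origin yield even faster decay and hence an effectively arbitrarily favorable constant. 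Feeding these asymptotics through the first-stage formula, the multiplier on $V$ emerges as a ratio balancing this moment constant against the degrees of freedom $n-p+2a-d-1$, and the threshold $c > \tfrac{n-p+2a-d-1}{2}$ is precisely the inequality that drives the multiplier below $1$, giving $\gamma < 1$. The delicate part is not the heuristic limit but making the Laplace estimate uniform enough to bound $(PV)$ at every state, including the behavior of the integrals for small and moderate $s$ where the full shape of $h$ (not just its germ at $0$) matters.

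The minorization condition should be comparatively routine. On any sublevel set $\{V \le r\}$ the residuals $s_i$ are uniformly bounded, so the densities $\psi(\cdot\,;s_i)$ are uniformly bounded below on compacta, and the $\mbox{IW}_d$ and $\mbox{N}_{p,d}$ kernels are jointly continuous and strictly positive. A standard argument then produces an $\varepsilon > 0$ and a probability measure $\nu$ with $k\big((\beta',\Sigma') \mid (\beta,\Sigma)\big) \ge \varepsilon\, \nu(\beta',\Sigma')$ for all $(\beta,\Sigma)$ in the set. Combining this with the drift condition established above yields geometric ergodicity in each of the three cases.
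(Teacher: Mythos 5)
First, a point of orientation: this paper never proves Theorem~\ref{thm:qh}. It is HJK\&Q's theorem, quoted as background; the paper's own contribution (Theorem~\ref{thm:main}) strengthens the conclusion to trace-class by entirely different machinery (bounding the trace integral $\int_\X k\big( (\beta,\Sigma) \,\big|\, (\beta,\Sigma)\big)\, d\beta \, d\Sigma$ via Lemmas~\ref{lem:suff}--\ref{lem:suff3}), not by drift and minorization. So your proposal can only be measured against the proof in the cited source, and there your overall plan is indeed the right one: HJK\&Q prove geometric ergodicity via a drift condition for a residual-based drift function of essentially the kind you propose, paired with a minorization on sublevel sets. Your minorization sketch is also essentially correct: $b(s)^{-1} = \int_0^\infty u^{d/2} e^{-su/2} h(u)\,du \le \int_0^\infty u^{d/2} h(u)\,du < \infty$ gives a uniform lower bound on $\prod_i \psi(z_i; s_i)$ over $\{V \le r\}$, and steps 2--3 of the algorithm do not depend on the current state once $z$ is given.

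The genuine gap is at the step you yourself flag as the crux: you are estimating the wrong moment of $\psi(\cdot\,;s)$. Carry out your own ``first stage.'' Writing $\delta = n-p+2a-d-1$ and using the matrix normal and inverse Wishart moment formulas,
\[
\Ex\big[\, s_i(\beta_{m+1},\Sigma_{m+1}) \,\big|\, z \,\big]
\;=\; \delta \, (\mu^T x_i - y_i)^T \big( y^T Q^{-1} y - \mu^T \Omega^{-1} \mu \big)^{-1} (\mu^T x_i - y_i)
\;+\; d\, x_i^T \Omega\, x_i \;.
\]
Since $Q^{-1} = \mathrm{diag}(z_1,\dots,z_n)$, we have $X^T Q^{-1} X \succeq z_i x_i x_i^T$ and $y^T Q^{-1} y - \mu^T \Omega^{-1} \mu = (y - X\mu)^T Q^{-1} (y - X\mu) \succeq z_i (y_i - \mu^T x_i)(y_i - \mu^T x_i)^T$, whence each term is bounded by a constant times $z_i^{-1}$, giving
\[
\Ex\big[\, s_i(\beta_{m+1},\Sigma_{m+1}) \,\big|\, z \,\big] \;\le\; \frac{n-p+2a-1}{z_i} \;.
\]
So the second stage requires sharp control of the \emph{inverse} moments $\Ex[Z_i^{-1} \mid s_i]$, not $\Ex[Z_i \mid s_i]$: nothing in the step 2--3 moments is linear in $z_i$. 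For $h(u) \sim l\, u^c$ near $0$, the Laplace estimate gives $\Ex[Z^{-1} \mid s] \sim s/(d+2c)$ as $s \to \infty$, so the drift multiplier is $(n-p+2a-1)/(d+2c)$, which is below $1$ precisely when $c > (n-p+2a-d-1)/2$ --- this is where the threshold comes from. Your moment cannot produce it: the constant $s\,\Ex[Z \mid s] \to d+2c+2$ \emph{increases} with $c$, pointing the inequality the wrong way, and your claim that the other two classes ``yield even faster decay'' fails for the direct moment --- if $h$ vanishes on $(0,\eta_0)$ then $Z_i \ge \eta_0$ almost surely, so $\Ex[Z \mid s] \ge \eta_0$ does not decay at all, yet this is the easiest case of the theorem (what is actually favourable there is $\Ex[Z^{-1} \mid s] \le 1/\eta_0$, bounded in $s$). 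The uniformity issues you acknowledge are real but secondary; as written, the quantitative heart of the argument does not go through.
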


\begin{remark}
  It is not necessary to check that ($S4$) holds before applying
  Theorem~\ref{thm:qh} because, together with ($S3$), the hypothesis
  of Theorem~\ref{thm:qh} implies that ($S4$) is satisfied.
\end{remark}

In this paper, we show that something \textit{much stronger} than
geometric ergodicity often holds.  We begin with some requisite
background material on Markov operators.  The posterior density can be
used to define an inner product
\[
\langle f_1,f_2 \rangle = \int_\X f_1(\beta, \Sigma) \, f_2(\beta, \Sigma) \,
\pi(\beta, \Sigma|y) \, d\beta \, d\Sigma \;,
\]
and norm $\norm{f} = \sqrt{\langle f,f \rangle}$ on the Hilbert space
\[
L_0^2 = \bigg \{ f: \X \rightarrow \mathbb{R} : \int_\X f^2(\beta,
\Sigma) \, \pi(\beta, \Sigma|y) \, d\beta \, d\Sigma < \infty
\hspace*{2mm} \mbox{and} \hspace*{2mm} \int_\X f(\beta, \Sigma) \,
\pi(\beta, \Sigma|y) \, d\beta \, d\Sigma = 0 \bigg \} \;.
\]
Now define the DA Markov operator $K : L_0^2 \rightarrow L_0^2$ as
that which takes $f \in L_0^2$ into
\[
(Kf)(\beta, \Sigma) = \int_\X f(\beta', \Sigma') \, k\big(
(\beta',\Sigma') \big| (\beta,\Sigma) \big) \, d\beta' \, d\Sigma' \;.
\]
Because $K$ is based on a DA algorithm, it is self-adjoint and
positive \citep{liu:wong:kong:1994}.  If, in addition, $K$ is also a
compact operator, then $K$ has a pure eigenvalue spectrum, all of its
eigenvalues reside in $[0,1)$, and the corresponding Markov chain is
geometrically ergodic \citep[see,
e.g.,][]{mira:geye:1999,hobe:roy:robe:2011}.  We note that the set of
Monte Carlo Markov chains whose operators are compact is a small
subset of those that are geometrically ergodic \citep[see,
e.g.,][p. 1755]{chan:geye:1994}.  Taking this a step further, $K$ is
said to be \textit{trace-class} if it is compact \textit{and} its
eigenvalues are summable \citep[see, e.g.][p. 267]{conw:1990}.  In
this paper, we provide sufficient conditions (on $h$) for $K$ to be
trace-class.  The benefits of using trace-class Markov operators are
spelled out in \citet{khar:hobe:2011}, and we exploit their results in
Section~\ref{sec:Haar}.

A statement of our main result requires substantial build-up, so here
in the Introduction we present only one simple, but powerful,
corollary.  Let $\mathbb{R}_+$ denote the set $(0,\infty)$, and define
a parametric family of functions $g_{\rho,\tau} : \mathbb{R}_+
\rightarrow [0,\infty)$ as follows.  For $\rho \in \mathbb{R}_+$ and
$\tau \in \mathbb{R}$, let
\[
g_{\rho,\tau}(u) = \exp \big\{ -\rho(\log u)^2 + \tau \log u \big\} \;.
\]
The following result is a corollary of Theorem~\ref{thm:main} in
Section~\ref{sec:main}.

\begin{corollary}
  \label{cor:main}
  Let $h$ be a mixing density.  If there exist $\rho \in
  \mathbb{R}_+$, $\tau \in \mathbb{R}$ and $\eta>0$ such that
  $h(u)/g_{\rho,\tau}(u)$ is non-decreasing in $(0,\eta)$, then the DA
  Markov operator, $K$, is trace-class.
\end{corollary}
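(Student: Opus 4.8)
The plan is to reduce the corollary to Theorem~\ref{thm:main} by verifying its hypothesis, and the engine behind that reduction is the standard trace-class criterion for data augmentation operators. Since $K$ is positive and self-adjoint (as noted via \citet{liu:wong:kong:1994} in the excerpt), it is trace-class if and only if its trace is finite, and because $K$ is an integral operator, that trace equals the integral of the Mtd along the diagonal:
\[
\trace(K) = \int_{\mathcal{S}_d}\int_{\mathbb{R}^{p\times d}} k\big((\beta,\Sigma)\,\big|\,(\beta,\Sigma)\big)\,d\beta\,d\Sigma = \int_{\mathcal{S}_d}\int_{\mathbb{R}^{p\times d}}\int_{\mathbb{R}^n_+}\pi(\beta,\Sigma|z,y)\,\pi(z|\beta,\Sigma,y)\,dz\,d\beta\,d\Sigma .
\]
So the entire question is the finiteness of this integral, and I expect Theorem~\ref{thm:main} to package a sufficient condition for it as a one-dimensional requirement on $h$. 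Thus the first step is to confirm that the monotonicity hypothesis on $h/g_{\rho,\tau}$ feeds into that one-dimensional requirement.

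Next I would trace how the mixing density enters the diagonal integral. The conditional $\pi(z|\beta,\Sigma,y)$ factors as $\prod_{i=1}^n \psi(z_i;s_i)$ with $s_i=(\beta^T x_i-y_i)^T\Sigma^{-1}(\beta^T x_i-y_i)$ and normalizer $b(s_i)$, where $b(s)^{-1}=\int_0^\infty u^{d/2}e^{-su/2}h(u)\,du$ is a Laplace-type transform of $u^{d/2}h(u)$. Carrying out the $\beta$-integration (matrix normal) and the $\Sigma$-integration (inverse Wishart), which are explicit by conjugacy, should collapse the diagonal integral to a condition involving $h$ only through this transform. Since for large argument the transform is governed by the mass of $h$ near the origin (by Watson's lemma, as $e^{-su/2}$ concentrates near $u=0$), the near-origin behavior of $h$ is exactly what controls it. This is why a hypothesis localized to a neighborhood $(0,\eta)$ of the origin is enough.

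The comparison with $g_{\rho,\tau}$ is the heart of the argument. Because $h(u)/g_{\rho,\tau}(u)$ is non-decreasing on $(0,\eta)$, its value there is bounded above by its value near $\eta$, which yields a domination $h(u)\le C\,g_{\rho,\tau}(u)$ for all $u\in(0,\eta)$; together with $(S3)$ controlling the tail, this reduces everything to the reference function. The key computation is then to estimate $\int_0^\eta u^{d/2}e^{-su/2}g_{\rho,\tau}(u)\,du$ for large $s$. Writing $g_{\rho,\tau}(u)=\exp\{-\rho(\log u)^2+\tau\log u\}$ and substituting $t=\log u$ converts the quadratic penalty into a Gaussian factor $e^{-\rho t^2}$, against which a Laplace-method estimate in $s$ can be carried out. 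The crucial qualitative output is that the quadratic-in-$\log u$ penalty forces $b(s)^{-1}$ to decay faster than any power of $s$; this faster-than-polynomial decay is what I expect to upgrade the conclusion from the mere geometric ergodicity of Theorem~\ref{thm:qh} (which corresponds to polynomial decay) to summable eigenvalues.

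The main obstacle I anticipate is twofold. First, one must convert the super-polynomial decay of $b(s)^{-1}$ into an actual finiteness proof for the multidimensional diagonal integral, which requires integrating out $\beta$ and $\Sigma$ cleanly and bounding the result uniformly up to the boundary of $\X$ (where $\Sigma$ degenerates or the residuals grow); this is the step in which the precise exponents dictated by $(S1)$--$(S4)$ must be tracked carefully. Second, the estimate for $\int_0^\eta u^{d/2}e^{-su/2}g_{\rho,\tau}(u)\,du$ is not a pure Gaussian integral, since the factor $e^{-s e^{t}/2}$ interacts with $e^{-\rho t^2}$, so establishing the decay rate rigorously rather than heuristically is the delicate analytic step, and it is precisely where the specific shape of $g_{\rho,\tau}$ is genuinely used.
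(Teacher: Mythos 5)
Your starting point is correct (trace-class for this positive self-adjoint $K$ reduces, via Khare and Hobert, to finiteness of $\int_\X k((\beta,\Sigma)\,|\,(\beta,\Sigma))\,d\beta\,d\Sigma$, and the near-origin behavior of $h$ through the normalizer of $\psi$ is indeed the crux), but the two steps you build on it both fail. First, the plan to ``carry out the $\beta$- and $\Sigma$-integrations explicitly by conjugacy'' is not available: the diagonal integrand contains the factor $\prod_{i=1}^n b(r_i)$ with $r_i = (\beta^T x_i - y_i)^T \Sigma^{-1} (\beta^T x_i - y_i)$, and $b$ is a general nonlinear function of these quadratic forms, so the $(\beta,\Sigma)$-integral has no closed conjugate form and does not collapse to a statement about the Laplace transform of $u^{d/2}h(u)$ alone. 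The paper's Lemma~\ref{lem:suff} sidesteps this entirely: one bounds $\psi(u;s) \le e^{(\zeta-1)us/2}\nu(u)$ uniformly in $s$ with $\zeta \in (1,2)$, after which the only surviving $(\beta,\Sigma)$-dependence is $\exp\{-\frac{2-\zeta}{2}\sum_i r_i z_i\}$, which matches the normalizing integral up to the scaling $\Sigma \mapsto \Sigma/(2-\zeta)$; the two matrix integrals then cancel as a ratio, and no explicit integration (and no delicate boundary analysis of $\X$) is ever needed.

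Second, and decisively, reducing from $h$ to $g_{\rho,\tau}$ via the upper bound $h \le C\,g_{\rho,\tau}$ on $(0,\eta)$ points the inequality the wrong way. What must be controlled is the \emph{denominator} of $\psi(u;s)$, namely $\int_0^\infty v^{d/2} e^{-sv/2} h(v)\,dv$, for which you need a \emph{lower} bound, and $h \le C g$ provides none: for instance $h(u) \propto e^{-1/u} g_{\rho,\tau}(u)$ satisfies the corollary's hypothesis (since $e^{-1/u}$ is increasing), yet its transform is vastly smaller than that of $g_{\rho,\tau}$, so bounding the numerator by $g$ while the $h$-normalizer sits in the denominator gives nothing. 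The paper's proof of condition (3) of Theorem~\ref{thm:main} uses the monotonicity of $t = h/g$ in precisely the opposite direction: restricting the normalizer to $[u,\zeta u]$, monotone $t$ gives $\int_u^{\zeta u} v^{d/2} h(v)\,dv \ge t(u)\int_u^{\zeta u} v^{d/2} g(v)\,dv$, and monotonicity of $u^{d/2}g(u)$ (the second assertion of Proposition~\ref{prop:pf}, which your sketch never invokes) gives $\int_u^{\zeta u} v^{d/2} g(v)\,dv \ge \frac{\zeta-1}{\zeta}\int_0^{\zeta u} w^{d/2} g(w)\,dw$, so that $t(u)$ cancels against the numerator $u^{d/2}h(u) = t(u)\,u^{d/2}g(u)$. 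This cancellation is the heart of the corollary and is absent from your argument. Relatedly, your proposed criterion --- super-polynomial decay of $b(s)^{-1}$ implies summable eigenvalues --- is not the operative condition and is unsupported: the paper's example in Section~\ref{sec:ftp} is faster than polynomial near the origin, hence has a transform decaying faster than any power of $s$, yet \eqref{eq:fail} holds and the method fails for it. The correct one-dimensional condition is \eqref{eq:suff2}, which for $g_{\rho,\tau}$ is verified not by Laplace asymptotics in $s$ but by the L'H\^{o}pital computation of Lemma~\ref{lem:suff3} with $\kappa(u) = u(\log u)^2$, as carried out in Proposition~\ref{prop:pf}.
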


An immediate consequence of Corollary~\ref{cor:main} is that, if $h$
is zero near the origin, then $K$ is trace-class.  Indeed, for any
$(\rho,\tau) \in \mathbb{R}_+ \times \mathbb{R}$,
$h(u)/g_{\rho,\tau}(u)$ is constant (and equal to zero) in a
neighborhood of the origin.  Corollary~\ref{cor:main} also implies
that if $h$ is a member of one of the standard parametric families
that are faster than polynomial near the origin (inverted gamma,
log-normal, generalized inverse Gaussian, and Fr\'{e}chet), then the
corresponding Markov operator is trace-class.  For example, consider
the case where the mixing density is inverted gamma.  In particular,
let $h(u) = b \, u^{-\alpha-1} e^{-\gamma/u} I_{\mathbb{R}_+}(u)$,
where $\alpha>d/2$, $\gamma>0$ and $b=b(\alpha,\gamma)$ is the
normalizing constant.  (We require $\alpha > d/2$ so that condition
($S3$) is satisfied.)  Taking $\rho=1$ and $\tau=-(\alpha+1)$, we have
\[
\frac{d}{du} \frac{h(u)}{g_{\rho,\tau}(u)} = b \, \frac{d}{du} \exp
\Big\{ -\frac{\gamma}{u} + (\log u)^2 \Big\} = \frac{b}{u} \Big[
\frac{\gamma}{u} + 2 \log u \Big] \exp \Big\{ -\frac{\gamma}{u} +
(\log u)^2 \Big\} \;,
\]
which is clearly positive in a neighborhood of 0.  Hence,
Corollary~\ref{cor:main} implies that $K$ is trace-class.  (This
result was established by \citet{jung:hobe:2014} in the special case
where $d=1$.)  Similar arguments can be used for the other three
families (log-normal, generalized inverse Gaussian and Fr\'{e}chet),
and these are given in Section~\ref{sec:main}.  Indeed, for a large
class of mixing densities (including the ones just mentioned) our
results provide a substantial strengthening of
\pcite{hobe:jung:khar:qin:2016} conclusion \textit{without any
  additional assumptions}.  On the other hand, as we now explain,
there are still many mixing densities that satisfy the hypotheses of
Theorem~\ref{thm:qh}, but to which our results are not applicable.

The following lemma, which is proven in Section~\ref{sec:main},
provides a sufficient condition for $K$ to be trace-class, and is one
of the key pieces of the proof of Theorem~\ref{thm:main} (and hence of
Corollary~\ref{cor:main}).

\vspace*{3mm}
\noindent
{\bf Lemma 2.\;}
\textit{Let $h$ be a mixing density that is strictly positive in a
  neighborhood of the origin.  If there exist $\zeta \in (1,2)$ and
  $\eta>0$ such that
\begin{equation}
  \label{eq:suff2}
  \int_{0}^{\eta} \frac{u^{\frac{d}{2}} h(u)}{\int_0^{\zeta u}
    v^{\frac{d}{2}} h(v) \, dv} \, du < \infty \;,
\end{equation}
then $K$ is trace-class.}
\vspace*{3mm}

Note that \eqref{eq:suff2} cannot hold if, for each $\eta>0$,
\begin{equation}
  \label{eq:fail}
\int_{0}^{\eta} \frac{u^{\frac{d}{2}} h(u)}{\int_0^{2u}
  v^{\frac{d}{2}} h(v) \, dv} \, du = \infty \;.
\end{equation}
Consequently, when $h$ is strictly positive in a neighborhood of the
origin and \eqref{eq:fail} holds, our results cannot be applied to
$h$.  For example, assume that $h$ is polynomial near the origin with
power $c$ so that $\frac{h(u)}{u^c} \rightarrow l \in \mathbb{R}_+$ as
$u \rightarrow 0$.  Then for all $u$ is some small neighborhood of the
origin, we have $l/2<\frac{h(u)}{u^c}<2l$.  So, if $\eta>0$ is small
enough, then for all $u \in (0,\eta)$,
\[
\frac{u^{\frac{d}{2}} h(u)}{\int_0^{2u} v^{\frac{d}{2}} h(v) \, dv}
\ge \frac{u^{\frac{d}{2}} \frac{l}{2} u^c}{\int_0^{2u} v^{\frac{d}{2}}
  2 l v^c \, dv} = \frac{b}{u} \;,
\]
where $b=b(d,l)$ is a positive constant.  Thus, since $\int_0^\eta
\frac{1}{u} \, du$ diverges for every $\eta>0$, \eqref{eq:fail} holds.
Consequently, our results are not applicable to mixing densities that
are polynomial near the origin.  Furthermore, in
Section~\ref{sec:main}, we give an example of a mixing density that is
faster than polynomial near the origin, but for which \eqref{eq:fail}
holds.

The remainder of this paper is organized as follows.  The main result
is stated and proven in Section~\ref{sec:main}.  In
Section~\ref{sec:ftp}, we examine the consequences of the main result
when the mixing density is faster than polynomial near the origin.  In
Section~\ref{sec:Haar}, we show that Theorem~\ref{thm:main} has
important implications for a Haar PX-DA variant of the DA algorithm
that was introduced by \citet{roy:hobe:2010} and extended by HJK\&Q.
Finally, the Appendix contains the definitions of the $\mbox{IW}_d$
and $\mbox{N}_{p,d}$ families, as well as some technical details.

\section{Main Result}
\label{sec:main}

In this section, we will be dealing with functions $g: \mathbb{R}_+
\rightarrow [0,\infty)$ that are strictly positive and differentiable
in a neighborhood of the origin.  Let $\mathcal{A}$ denote the set of
all such functions, and let $\mathcal{K}$ denote the subset of
$\mathcal{A}$ consisting of functions whose reciprocals are integrable
near the origin, i.e.,
\[
\mathcal{K} = \Big \{ \kappa \in \mathcal{A} : \int_0^\eta
\frac{1}{\kappa(u)} \, du < \infty \; \mbox{for some $\eta>0$} \Big \}
\;.
\]
The function $\kappa(u) = u (\log u)^2$ is a member of $\mathcal{K}$,
and we will use this fact in the sequel.  Now, for fixed $\kappa \in
\mathcal{K}$ and fixed $\zeta \in (1,2)$, let $\mathcal{C}(\kappa,\zeta)$
denote the subset of $\mathcal{A}$ containing the functions $g$ that
satisfy the following three conditions:
\begin{enumerate}
\item $u^{\frac{d}{2}} g(u)$ is bounded in a neighborhood of the
  origin,
\item $\lim_{u \rightarrow 0} \kappa(u) u^{\frac{d}{2}} g(u) = 0$,
\item There exist $l_1,l_2 \in \mathbb{R}$ such that
\begin{equation}
    \label{eq:twocs}
    \lim_{u \rightarrow 0} \Big( \kappa'(u) + \frac{d}{2}
    \frac{\kappa(u)}{u} \Big) \frac{g(u)}{g(\zeta u)} = l_1 \hspace*{6mm}
    \mbox{and} \hspace*{6mm} \lim_{u \rightarrow 0} \frac{\kappa(u) g'(u)}
    {g(\zeta u)} =l_2 \;.
\end{equation}
\end{enumerate}
The following result is proven in the Appendix.

\begin{proposition}
  \label{prop:pf}
  Fix $(\rho,\tau) \in \mathbb{R}_+ \times \mathbb{R}$.  Then
  $g_{\rho,\tau} \in \mathcal{C}(\kappa,3/2)$, with $\kappa(u) = u
  (\log u)^2$.  Furthermore, $u^{\frac{d}{2}} g_{\rho,\tau}(u)$ is
  non-decreasing in a neighborhood of the origin.
\end{proposition}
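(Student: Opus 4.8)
The plan is to reduce every quantity appearing in the definition of $\mathcal{C}(\kappa,3/2)$ to an explicit function of $t := \log u$ and then exploit that $u \to 0^+$ corresponds to $t \to -\infty$. First I would note that $g_{\rho,\tau}$ is strictly positive and infinitely differentiable on $\mathbb{R}_+$, so $g_{\rho,\tau} \in \mathcal{A}$, and record the basic identities obtained from the substitution $u = e^t$. Writing $c := \log(3/2) > 0$, one has $u^{d/2} g_{\rho,\tau}(u) = \exp\{-\rho t^2 + (\tau + d/2)t\}$ and, since $\log(\zeta u) = t + c$, the ratio $g_{\rho,\tau}(u)/g_{\rho,\tau}(\zeta u) = \exp\{2\rho c\, t + \rho c^2 - \tau c\}$. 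The single structural observation driving the whole argument is that $\rho > 0$ and $c > 0$, so the coefficient of $t^2$ in the first expression and the coefficient $2\rho c$ of $t$ in the second are such that these exponential factors decay as $t \to -\infty$; consequently any polynomial-in-$t$ prefactor is negligible against them.

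With these identities in hand, conditions 1 and 2 of $\mathcal{C}(\kappa,3/2)$ are immediate: $u^{d/2}g_{\rho,\tau}(u) \to 0$ (hence is bounded near the origin) because $-\rho t^2$ dominates, and $\kappa(u)\, u^{d/2} g_{\rho,\tau}(u) = t^2 \exp\{-\rho t^2 + (\tau + d/2 + 1)t\} \to 0$ for the same reason. For the two limits in \eqref{eq:twocs} I would first compute $\kappa'(u) = (\log u)^2 + 2\log u$ and $\kappa(u)/u = (\log u)^2$, so that $\kappa'(u) + (d/2)\kappa(u)/u = (1 + d/2)t^2 + 2t$; multiplying by the ratio yields $[(1+d/2)t^2 + 2t]\exp\{2\rho c\, t + \rho c^2 - \tau c\}$, a polynomial times a decaying exponential, whose limit is $l_1 = 0$. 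Similarly, logarithmic differentiation gives $g_{\rho,\tau}'(u) = g_{\rho,\tau}(u)(\tau - 2\rho t)/u$, whence $\kappa(u)g_{\rho,\tau}'(u)/g_{\rho,\tau}(\zeta u) = t^2(\tau - 2\rho t)\exp\{2\rho c\, t + \rho c^2 - \tau c\}$, again a polynomial times a decaying exponential, so $l_2 = 0$. This establishes $g_{\rho,\tau} \in \mathcal{C}(\kappa,3/2)$.

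For the final monotonicity claim I would work directly: $\frac{d}{du}\log\big(u^{d/2}g_{\rho,\tau}(u)\big) = u^{-1}(\tau + d/2 - 2\rho \log u)$, which is positive precisely when $\log u < (\tau + d/2)/(2\rho)$, i.e.\ on the interval $(0, \exp\{(\tau+d/2)/(2\rho)\})$. Hence $u^{d/2}g_{\rho,\tau}(u)$ is (strictly) increasing on that neighborhood of the origin, which is more than the asserted non-decrease.

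This proposition is really a routine computation rather than a result with a genuine obstacle; the only point demanding care is the ratio $g_{\rho,\tau}(u)/g_{\rho,\tau}(\zeta u)$, where one must verify that the coefficient $2\rho c$ multiplying $t$ makes the exponent tend to $-\infty$ as $t \to -\infty$. This is exactly where the hypotheses $\rho > 0$ and $\zeta = 3/2 > 1$ (so that $c = \log\zeta > 0$) are used, and it is what guarantees the two limits are finite (indeed equal to $0$) rather than infinite. I would double-check the signs in that exponent and in $\kappa'$ before asserting the limits.
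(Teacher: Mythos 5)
Your proof is correct and follows essentially the same route as the paper's: the substitution $t=\log u$ is just a notational repackaging of the paper's computation, which writes the ratio $g_{\rho,\tau}(u)/g_{\rho,\tau}(3u/2)$ as a constant times $u^{2\rho\log\frac{3}{2}}$ and kills the polynomial-in-$\log u$ prefactors in exactly the way your ``polynomial times decaying exponential'' step does, yielding $l_1=l_2=0$. The only difference is cosmetic: you spell out the monotonicity of $u^{\frac{d}{2}}g_{\rho,\tau}(u)$ via logarithmic differentiation, which the paper dismisses as obvious.
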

Here is our main result.
\begin{theorem}
  \label{thm:main}
  Let $h$ be a mixing density.  Each of the following three conditions
  is sufficient for the corresponding DA Markov operator, $K$, to be
  trace-class.
\begin{enumerate}
\item The mixing density $h$ is zero near the origin.
\item There exist $\kappa \in \mathcal{K}$, $\zeta \in (1,2)$ and $g \in
  \mathcal{C}(\kappa,\zeta)$ such that $\lim_{u \rightarrow 0}
  \frac{h(u)}{g(u)} \in \mathbb{R}_+$.
\item There exist $\kappa \in \mathcal{K}$, $\zeta \in (1,2)$ and $g \in
  \mathcal{C}(\kappa,\zeta)$ such that both $u^{\frac{d}{2}} g(u)$ and
  $\frac{h(u)}{g(u)}$ are non-decreasing in a neighborhood of the
  origin.
\end{enumerate}
\end{theorem}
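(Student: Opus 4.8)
The plan is to handle Cases~2 and 3 by verifying the one-dimensional integrability condition \eqref{eq:suff2} and invoking Lemma~2, and to treat Case~1 by a direct estimate of $\trace(K) = \int_\X k\big((\beta,\Sigma)|(\beta,\Sigma)\big)\,d\beta\,d\Sigma$, finiteness of which is equivalent to trace-classness for the positive self-adjoint operator $K$. Write $G(u) = \int_0^u v^{\frac{d}{2}} g(v)\,dv$ and $\Phi(u) = \kappa(u)\,u^{\frac{d}{2}} g(u)$. The engine driving Cases~2 and 3 is a single comparison extracted from the definition of $\mathcal{C}(\kappa,\zeta)$: I claim $\Phi(u) \le C\,G(\zeta u)$ throughout a neighborhood of the origin. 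Conditions~1 and 2 of $\mathcal{C}(\kappa,\zeta)$ give $G(\zeta u) \to 0$ and $\Phi(u) \to 0$ as $u \to 0$, so $\Phi(u)/G(\zeta u)$ is a $0/0$ form. Differentiating $\Phi$ and dividing by $u^{\frac{d}{2}} g(\zeta u)$ produces precisely the two ratios appearing in \eqref{eq:twocs}, whence $\Phi'(u)/(u^{\frac{d}{2}} g(\zeta u)) \to l_1 + l_2$, while $\frac{d}{du} G(\zeta u) = \zeta^{1+\frac{d}{2}} u^{\frac{d}{2}} g(\zeta u)$. Since $g>0$ near the origin makes $\frac{d}{du}G(\zeta u) > 0$, L'H\^{o}pital yields $\Phi(u)/G(\zeta u) \to (l_1+l_2)/\zeta^{1+\frac{d}{2}}$, a finite limit; dividing through by $\kappa(u)$ gives the key estimate $u^{\frac{d}{2}} g(u)/G(\zeta u) \le C/\kappa(u)$, whose right side is integrable near $0$ because $\kappa \in \mathcal{K}$.

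For Case~2, the hypothesis $\lim_{u\to 0} h(u)/g(u) = L \in \mathbb{R}_+$ forces $h$ to be strictly positive near the origin and to satisfy $\tfrac{L}{2} g(u) \le h(u) \le 2L\, g(u)$ there, so Lemma~2 applies. Bounding the numerator of \eqref{eq:suff2} above and the inner integral below by the corresponding expressions in $g$ (at the cost of the constants $2L$ and $L/2$) reduces \eqref{eq:suff2} to $\int_0^\eta u^{\frac{d}{2}} g(u)/G(\zeta u)\,du < \infty$, which is immediate from the key estimate.

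For Case~3, I would first note that if $h(u)/g(u)$ is non-decreasing near the origin then its zero set there is an interval $(0,a)$; if $a>0$ then $h$ is zero near the origin and Case~1 applies, so I may assume $h>0$ near the origin and invoke Lemma~2. Writing $r = h/g$ (non-decreasing) and using $r(v) \ge r(u)$ on $[u,\zeta u]$ gives $\int_0^{\zeta u} v^{\frac{d}{2}} h(v)\,dv \ge r(u)\,[G(\zeta u) - G(u)]$; since the numerator of \eqref{eq:suff2} equals $r(u)\,u^{\frac{d}{2}} g(u)$, the factor $r(u)$ cancels and the integrand is at most $u^{\frac{d}{2}} g(u)/(G(\zeta u) - G(u))$. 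Finally, the second hypothesis of Case~3 — that $w(u) := u^{\frac{d}{2}} g(u)$ is non-decreasing — gives $\int_0^u w \le u\,w(u) \le \tfrac{1}{\zeta-1}\int_u^{\zeta u} w$, hence $G(\zeta u) - G(u) \ge \tfrac{\zeta-1}{\zeta}\,G(\zeta u)$; combined with the key estimate this again bounds the integrand by a constant multiple of $1/\kappa(u)$, establishing \eqref{eq:suff2}.

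Case~1 cannot be read off Lemma~2, whose positivity hypothesis is exactly what prevents the integrand of \eqref{eq:suff2} from degenerating to $0/0$ when $h$ vanishes near the origin; I would instead bound $\trace(K) = \int_\X k\big((\beta,\Sigma)|(\beta,\Sigma)\big)\,d\beta\,d\Sigma$ directly, using that $h$, and hence every density $\psi(\cdot\,;s)$ arising in the first step of the algorithm, is supported away from $0$, so the near-origin behaviour responsible for a divergent trace is simply absent. I expect this direct estimate and the L'H\^{o}pital comparison to be the two main obstacles: the former because it sidesteps Lemma~2 entirely, and the latter because applicability of L'H\^{o}pital must be justified from conditions~1--3 of $\mathcal{C}(\kappa,\zeta)$ rather than assumed, and one must check that the finiteness of the limit $(l_1+l_2)/\zeta^{1+\frac{d}{2}}$ (allowing the value $0$) still yields the boundedness needed for the key estimate.
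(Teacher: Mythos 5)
Your treatment of conditions 2 and 3 is correct and is essentially the paper's own argument. Your ``key estimate'' $\kappa(u)\,u^{\frac{d}{2}}g(u) \le C \int_0^{\zeta u} v^{\frac{d}{2}}g(v)\,dv$ near the origin is exactly Lemma~\ref{lem:suff3}, proved by the same L'H\^{o}pital computation; the $0/0$ justification you flag as an obstacle is supplied, just as you say, by conditions 1 and 2 of $\mathcal{C}(\kappa,\zeta)$, and finiteness of $(l_1+l_2)/\zeta^{\frac{d}{2}+1}$ (automatically $\ge 0$, since the quotient is non-negative) is all the boundedness requires. Your Case 2 comparison with constants $L/2$ and $2L$ matches the paper verbatim, as does your reduction in Case 3 to $h>0$ near the origin via monotonicity of $h/g$. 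The one genuine (minor) deviation is in Case 3: the paper derives $\int_u^{\zeta u} v^{\frac{d}{2}}g(v)\,dv \ge \frac{\zeta-1}{\zeta}\int_0^{\zeta u} v^{\frac{d}{2}}g(v)\,dv$ via the substitution $w = \zeta(v-u)/(\zeta-1)$ combined with monotonicity of $v^{\frac{d}{2}}g(v)$, whereas your Chebyshev-style argument $\int_0^u w \le u\,w(u) \le \frac{1}{\zeta-1}\int_u^{\zeta u} w$ reaches the identical inequality by a cleaner and equally valid route.

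The genuine gap is Case 1, which you announce rather than prove. ``The near-origin behaviour is simply absent'' does not address the actual difficulty in bounding $\mathrm{tr}(K) = \int_\X k\big((\beta,\Sigma)\,\big|\,(\beta,\Sigma)\big)\,d\beta\,d\Sigma$, which is uniformity in $s$, not the $u$-integral: as $(\beta,\Sigma)$ ranges over $\X$, each $s = r_i(\beta,\Sigma)$ ranges over all of $[0,\infty)$, and when $h$ vanishes on $(0,\eta_0)$ the normalizing constant $b(s)$ in \eqref{eq:psi} grows at least like $e^{\eta_0 s/2}$, so any pointwise bound on $\psi(u;s)$ necessarily carries a factor growing exponentially in $us$. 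The paper handles this in two steps you would still need to supply. First, with $J = \int_0^{3\eta_0/2} v^{\frac{d}{2}}h(v)\,dv > 0$, one lower-bounds $\int_{\mathbb{R}_+} v^{\frac{d}{2}} e^{-\frac{sv}{2}} h(v)\,dv \ge J e^{-\frac{3\eta_0 s}{4}}$ and splits $u \ge \eta_0$ from $u < \eta_0$ (where $h = 0$) to obtain $\psi(u;s) \le J^{-1} u^{\frac{d}{2}} h(u)\, e^{\frac{su}{4}}$, i.e., condition \eqref{eq:main} with $\zeta = 3/2$ and $\nu(u) = J^{-1}u^{\frac{d}{2}}h(u)$, integrable by ($S3$). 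Second, Lemma~\ref{lem:suff} shows that such a bound with any $\zeta < 2$ forces a finite trace: the factor $e^{(\zeta-1)r_i z_i/2}$ only partially cancels $e^{-r_i z_i/2}$, leaving $e^{-(2-\zeta)r_i z_i/2}$, and the change of variables $\Sigma' = \Sigma/(2-\zeta)$ shows that the $(\beta,\Sigma)$-integral is inflated only by the constant $(2-\zeta)^{-\frac{(n+2a-d-1)d}{2}}$. Without recognizing that an exponential-in-$s$ factor must be tolerated and then absorbed by the $(\beta,\Sigma)$ integration, your Case 1 remains a plan rather than a proof; note also that this same Lemma~\ref{lem:suff} is what underlies Lemma~2, so the trace computation cannot be sidestepped even in your Cases 2 and 3 — it is merely hidden inside the lemma you cite.
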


\begin{remark}
  Suppose that $h \in \mathcal{C}(\kappa,\zeta)$.  Then, by taking $g=h$,
  the second condition of Theorem~\ref{thm:main} is satisfied, so $K$
  is trace-class.  However, this argument requires that $h$ be
  differentiable in a neighborhood of the origin.  The surrogate
  function, $g$, allows us to handle non-differentiable mixing
  densities.
\end{remark}

\begin{remark}
  Note that Corollary~\ref{cor:main} (from the Introduction) follows
  immediately from Theorem~\ref{thm:main} and
  Proposition~\ref{prop:pf}.
\end{remark}

Our proof of Theorem~\ref{thm:main} is based on three lemmas, which we
now state and prove.

\begin{lemma}
  \label{lem:suff}
  Let $h$ be a mixing density, and let $\psi(u;s)$ be as in
  \eqref{eq:psi}.  Suppose there exist $\zeta<2$ and $\nu: \mathbb{R}_+
  \rightarrow [0,\infty)$ with $\int_{\mathbb{R}_+} \nu(u) \, du <
  \infty$ such that
\begin{equation}
  \label{eq:main}
  \psi(u;s) \le \exp \bigg \{ \frac{(\zeta-1)us}{2} \bigg\} \nu(u)
\end{equation}
for all $u \in \mathbb{R}_+$ and all $s \in [0,\infty)$.  Then $K$ is
trace-class.
\end{lemma}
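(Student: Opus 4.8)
The plan is to prove that the trace of $K$ is finite, since for a positive self-adjoint operator finiteness of the trace is equivalent to being trace-class. Writing $x=(\beta,\Sigma)$, I would use the standard data-augmentation factorization $K=A^{*}A$, where $A:L_0^2\to L^2(\pi_Z)$ is the conditional-expectation operator $(Af)(z)=\int_\X f(\beta,\Sigma)\,\pi(\beta,\Sigma\mid z,y)\,d\beta\,d\Sigma$, $\pi_Z$ is the marginal posterior of the latent vector $z$, and $A^{*}$ is the reverse conditional expectation. Then $K=A^{*}A$ is automatically positive and self-adjoint, and $\trace(K)=\|A\|_{HS}^2$. Writing out $\|A\|_{HS}^2$ in terms of the kernel $\pi(\beta,\Sigma\mid z,y)/\pi(\beta,\Sigma\mid y)$ and using the identity $\pi(\beta,\Sigma\mid z,y)\,\pi_Z(z)=\pi(z\mid\beta,\Sigma,y)\,\pi(\beta,\Sigma\mid y)$ collapses the Hilbert--Schmidt norm to the diagonal integral
\[
\trace(K) = \int_\X \int_{\mathbb{R}^n_+} \pi(\beta,\Sigma\mid z,y)\,\pi(z\mid \beta,\Sigma,y)\,dz\,d\beta\,d\Sigma .
\]
Thus it suffices to bound this integral; its finiteness forces $A$ to be Hilbert--Schmidt, hence $K$ to be trace-class.

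Next I would substitute the two conditionals read off from the DA algorithm. From Step~1, $\pi(z\mid\beta,\Sigma,y)=\prod_{i=1}^n \psi(z_i;s_i)$ with $s_i = (\beta^T x_i - y_i)^T\Sigma^{-1}(\beta^T x_i - y_i)$, and applying the hypothesized bound \eqref{eq:main} factor by factor gives
\[
\pi(z\mid\beta,\Sigma,y) \le \exp\Big\{\tfrac{\zeta-1}{2}\textstyle\sum_{i=1}^n z_i s_i\Big\}\prod_{i=1}^n \nu(z_i) .
\]
The crucial observation is that $\sum_i z_i s_i = \trace\big[\Sigma^{-1}(X\beta-y)^T Q^{-1}(X\beta-y)\big]=:A(\beta,\Sigma,z)$ is exactly the quadratic form appearing in the Gaussian part of the complete-data posterior $\pi(\beta,\Sigma\mid z,y) = C(z)\,|\Sigma|^{-(n/2+a)}\exp\{-\tfrac12 A(\beta,\Sigma,z)\}$, where $C(z)$ collects all factors not involving $(\beta,\Sigma)$ and equals the reciprocal of $N(z):=\int_\X |\Sigma|^{-(n/2+a)} e^{-A/2}\,d\beta\,d\Sigma$.

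Multiplying the two exponentials then produces $\exp\{-\tfrac{2-\zeta}{2}A\}$ with $2-\zeta>0$, so the inner integral over $(\beta,\Sigma)$ becomes $C(z)\int_\X |\Sigma|^{-(n/2+a)}\exp\{-\tfrac{2-\zeta}{2}A\}\,d\beta\,d\Sigma$. Here I would exploit that $A$ is linear in $z$, i.e.\ $A(\beta,\Sigma,tz)=t\,A(\beta,\Sigma,z)$, which via the substitution $\Sigma\mapsto t\Sigma$ yields the homogeneity $N(tz)=t^{\gamma}N(z)$ with $\gamma=\tfrac{d}{2}(d+1-n-2a)$. Consequently $\int_\X |\Sigma|^{-(n/2+a)}e^{-(2-\zeta)A/2}\,d\beta\,d\Sigma = N((2-\zeta)z)=(2-\zeta)^{\gamma}N(z)$, and since $C(z)=1/N(z)$ the inner integral equals the $z$-free constant $(2-\zeta)^{\gamma}$. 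Therefore
\[
\trace(K) \le (2-\zeta)^{\gamma}\int_{\mathbb{R}^n_+}\prod_{i=1}^n \nu(z_i)\,dz = (2-\zeta)^{\gamma}\Big(\int_{\mathbb{R}_+}\nu(u)\,du\Big)^n < \infty ,
\]
by the integrability of $\nu$.

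I expect the main obstacle to be the bookkeeping in the third step: checking that $\pi(\beta,\Sigma\mid z,y)$ really has the clean form $C(z)\,|\Sigma|^{-(n/2+a)}e^{-A/2}$ with all residual $z$-dependence confined to $C(z)$, and then justifying the scaling identity $N(tz)=t^{\gamma}N(z)$ together with the finiteness of $N(z)$. The latter is precisely where conditions ($S1$)--($S2$) enter, since they guarantee that $X^T Q^{-1} X$ is invertible and that the inverse-Wishart and matrix-normal draws of Steps~2--3 are proper, so that $N(z)\in(0,\infty)$ and the homogeneity argument is valid. A secondary point I would state carefully is the reduction in the first step---that finiteness of the diagonal integral $\int_\X k\big((\beta,\Sigma)\mid(\beta,\Sigma)\big)\,d\beta\,d\Sigma$ is genuinely \emph{sufficient} (not merely necessary) for trace-class, which is exactly what the $K=A^{*}A$ with $A$ Hilbert--Schmidt viewpoint delivers.
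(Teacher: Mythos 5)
Your proof is correct and takes essentially the same route as the paper's: the paper likewise reduces the problem to showing $\int_{\mathcal{S}_d}\int_{\mathbb{R}^{p\times d}} k\big((\beta,\Sigma)\,\big|\,(\beta,\Sigma)\big)\,d\beta\,d\Sigma<\infty$ (citing \citet{khar:hobe:2011} for exactly the $K=A^{*}A$ Hilbert--Schmidt reduction you spell out), applies \eqref{eq:main} factorwise to produce $\exp\big\{-\tfrac{2-\zeta}{2}\sum_{i=1}^n r_i z_i\big\}$, and cancels the $z$-dependent normalizer by the substitution $\Sigma'=\Sigma/(2-\zeta)$ --- the same scale change underlying your homogeneity identity $N(tz)=t^{\gamma}N(z)$, since your $\gamma=\tfrac{d}{2}(d+1-n-2a)$ gives precisely the paper's constant $(2-\zeta)^{-\frac{(n+2a-d-1)d}{2}}$. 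The resulting bound $(2-\zeta)^{\gamma}\big(\int_{\mathbb{R}_+}\nu(u)\,du\big)^n$ is identical, so the only difference is presentational: you derive the cited diagonal-integral criterion rather than invoking it.
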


\begin{proof}
  For $i=1,2,\dots,n$, define $r_i = r_i(\beta,\Sigma) = \big( \beta^T
  x_i - y_i \big)^T \Sigma^{-1} \big( \beta^T x_i - y_i \big)$.  Of
  course, $r_i \ge 0$.  First, it suffices to show that
\[
\int_{\mathcal{S}_d} \int_{\mathbb{R}^{p \times d}} k \big(
(\beta,\Sigma) \big| (\beta,\Sigma) \big) \, d\beta \, d\Sigma <
\infty \;,
\]
\citep[see, e.g.,][]{khar:hobe:2011}.  Routine calculations show that
\begin{align*}
  \pi(\beta,\Sigma \big| z,y) \pi(z \big| \beta,\Sigma,y) & =
  \frac{|\Sigma|^{-\frac{n+2a}{2}} \exp \big \{ -\frac{1}{2} \sumn r_i
    z_i \big\}}{\int_{\mathcal{S}_d} \int_{\mathbb{R}^{p \times d}}
    |\Sigma|^{-\frac{n+2a}{2}} \exp \big \{ -\frac{1}{2} \sumn r_i z_i
    \big\} \, d\beta \, d\Sigma} \prod_{i=1}^n \psi(z_i;r_i) \\ & \le
  \frac{|\Sigma|^{-\frac{n+2a}{2}} \exp \big \{ - \frac{(2-\zeta)}{2}
    \sumn r_i z_i \big\}}{\int_{\mathcal{S}_d} \int_{\mathbb{R}^{p
        \times d}} |\Sigma|^{-\frac{n+2a}{2}} \exp \big \{
    -\frac{1}{2} \sumn r_i z_i \big\} \, d\beta \, d\Sigma}
  \prod_{i=1}^n \nu(z_i) \;.
\end{align*}
The transformation $\Sigma' = \Sigma/(2-\zeta)$, yields
\begin{align*}
  \int_{\mathcal{S}_d} \int_{\mathbb{R}^{p \times d}}
  |\Sigma|^{-\frac{n+2a}{2}} \exp & \Big \{ - \frac{(2-\zeta)}{2} \sumn
  r_i z_i \Big\} \, d\beta \, d\Sigma \\ & =
  \frac{1}{(2-\zeta)^{\frac{(n+2a-d-1)d}{2}}} \int_{\mathcal{S}_d}
  \int_{\mathbb{R}^{p \times d}} |\Sigma|^{-\frac{n+2a}{2}} \exp \Big
  \{ -\frac{1}{2} \sumn r_i z_i \Big\} \, d\beta \, d\Sigma \;.
\end{align*}
It follows that,
\[
\int_{\mathcal{S}_d} \int_{\mathbb{R}^{p \times d}} \pi(\beta,\Sigma
\big| z,y) \pi(z \big| \beta,\Sigma,y) \, d\beta \, d\Sigma \le
\frac{1}{(2-\zeta)^{\frac{(n+2a-d-1)d}{2}}} \prod_{i=1}^n \nu(z_i) \;.
\]
Therefore,
\begin{align*}
  \int_{\mathcal{S}_d} \int_{\mathbb{R}^{p \times d}} k \big(
  (\beta,\Sigma) \big| (\beta,\Sigma) \big) \, d\beta \, d\Sigma & =
  \int_{\mathbb{R}^n_+} \int_{\mathcal{S}_d} \int_{\mathbb{R}^{p
      \times d}} \pi(\beta,\Sigma|z,y) \, \pi(z|\beta,\Sigma,y) \, d\beta
  \, d\Sigma \, dz \\ & \le \frac{1}{(2-\zeta)^{\frac{(n+2a-d-1)d}{2}}}
  \bigg( \int_{\mathbb{R}_+} \nu(u) \, du \bigg)^n < \infty \;.
\end{align*}
\end{proof}

The following lemma was given in the Introduction, and is restated
here for convenience.

\begin{lemma}
  \label{lem:suff2}
  Let $h$ be a mixing density that is strictly positive in a
  neighborhood of the origin.  If there exist $\zeta \in (1,2)$ and
  $\eta>0$ such that
\begin{equation}
  \int_{0}^{\eta} \frac{u^{\frac{d}{2}} h(u)}{\int_0^{\zeta u}
    v^{\frac{d}{2}} h(v) \, dv} \, du < \infty \tag{\ref{eq:suff2}} \;,
\end{equation}
then $K$ is trace-class.
\end{lemma}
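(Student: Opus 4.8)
The plan is to derive the hypothesis of Lemma~\ref{lem:suff} from \eqref{eq:suff2} and then invoke that lemma directly. Recall from \eqref{eq:psi} that $\psi(u;s) = b(s)\, u^{\frac{d}{2}} e^{-su/2} h(u)$, where the normalizing constant satisfies $b(s)^{-1} = \int_0^\infty v^{\frac{d}{2}} e^{-sv/2} h(v)\,dv$. The goal is to exhibit an integrable $\nu$ and a $\zeta < 2$ for which $\psi(u;s) \le e^{(\zeta-1)us/2}\,\nu(u)$ holds uniformly in $s \ge 0$; Lemma~\ref{lem:suff} then yields that $K$ is trace-class. I would use the very same $\zeta \in (1,2)$ that appears in \eqref{eq:suff2}, which satisfies the requirement $\zeta < 2$ of Lemma~\ref{lem:suff}.

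The crux is a uniform-in-$s$ lower bound on $b(s)^{-1}$. First I would discard all mass in the defining integral lying outside $(0,\zeta u)$, and then exploit that $e^{-sv/2} \ge e^{-s\zeta u/2}$ for every $v \in (0,\zeta u)$ (since $s \ge 0$), giving
\[
b(s)^{-1} \;\ge\; \int_0^{\zeta u} v^{\frac{d}{2}} e^{-sv/2} h(v)\,dv \;\ge\; e^{-s\zeta u/2}\int_0^{\zeta u} v^{\frac{d}{2}} h(v)\,dv \;.
\]
Substituting this into $\psi(u;s)$, the factors $e^{-su/2}$ and $e^{-s\zeta u/2}$ combine to produce exactly $e^{(\zeta-1)su/2}$, and the surviving terms define
\[
\nu(u) \;:=\; \frac{u^{\frac{d}{2}} h(u)}{\int_0^{\zeta u} v^{\frac{d}{2}} h(v)\,dv} \;,
\]
so that $\psi(u;s) \le e^{(\zeta-1)us/2}\,\nu(u)$ for all $u>0$ and all $s \ge 0$. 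This exact cancellation is what makes the bound fit the template of Lemma~\ref{lem:suff}, and it is the step I expect to be the heart of the argument; the remainder is bookkeeping.

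It then remains to verify that $\nu$ is integrable on all of $\mathbb{R}_+$, whereas \eqref{eq:suff2} only controls $\int_0^\eta \nu(u)\,du$. I would split the integral at $\eta$. On $(0,\eta)$ integrability is precisely \eqref{eq:suff2}. For the tail $[\eta,\infty)$, observe that the denominator of $\nu$ is nondecreasing in $u$, so for $u \ge \eta$ it is bounded below by $c := \int_0^{\zeta\eta} v^{\frac{d}{2}} h(v)\,dv$, which is positive (because $h$ is strictly positive near the origin) and finite (by condition ($S3$)). Hence $\nu(u) \le c^{-1} u^{\frac{d}{2}} h(u)$ on $[\eta,\infty)$, and a second appeal to ($S3$) gives $\int_\eta^\infty \nu(u)\,du \le c^{-1}\int_0^\infty u^{\frac{d}{2}} h(u)\,du < \infty$. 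With $\nu$ integrable and the pointwise bound established, Lemma~\ref{lem:suff} applies and $K$ is trace-class.
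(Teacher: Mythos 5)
Your proof is correct and follows essentially the same route as the paper's: truncating the normalizing integral to $(0,\zeta u)$ and using $e^{-sv/2} \ge e^{-s\zeta u/2}$ there to get the pointwise bound $\psi(u;s) \le e^{(\zeta-1)su/2}\,\nu(u)$ with $\nu(u) = u^{\frac{d}{2}}h(u)\big/\int_0^{\zeta u} v^{\frac{d}{2}}h(v)\,dv$, then splitting $\int_{\mathbb{R}_+}\nu$ at $\eta$ and handling the tail via the monotone denominator, positivity of $h$ near the origin, and condition ($S3$), exactly as in the paper, before invoking Lemma~\ref{lem:suff}. No gaps.
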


\begin{proof}
  First, note that
\[
\psi(u;s) = \frac{u^{\frac{d}{2}} \, e^{ -\frac{s u}{2}} \,
  h(u)}{\int_{\mathbb{R}_+} v^{\frac{d}{2}} \, e^{ -\frac{s v}{2}} \,
  h(v) \, dv} \le \frac{u^{\frac{d}{2}} \, e^{ -\frac{s u}{2}} \,
  h(u)}{\int_0^{\zeta u} v^{\frac{d}{2}} \, e^{ -\frac{s v}{2}} \, h(v) \,
  dv} \le \frac{\exp \Big \{ \frac{(\zeta-1)su}{2} \Big \} u^{\frac{d}{2}}
  \, h(u)}{\int_0^{\zeta u} v^{\frac{d}{2}} \, h(v) \, dv} \;.
\]
By Lemma~\ref{lem:suff}, it suffices to show that
\[
\int_{\mathbb{R}_+} \frac{u^{\frac{d}{2}} \, h(u)}{\int_0^{\zeta u}
  v^{\frac{d}{2}} \, h(v) \, dv} \, du < \infty \;.
\]
But, for any $\eta>0$, we have
\[
\int_{\eta}^\infty \frac{u^{\frac{d}{2}} \, h(u)}{\int_0^{\zeta u}
  v^{\frac{d}{2}} \, h(v) \, dv} \, du \le \frac{\int_{\eta}^\infty
  u^{\frac{d}{2}} \, h(u) \, du}{\int_0^{\zeta\eta} v^{\frac{d}{2}} \,
  h(v) \, dv} < \infty \;,
\]
and the result follows.
\end{proof}

\begin{lemma}
  \label{lem:suff3}
  Let $g \in \mathcal{C}(\kappa,\zeta)$ for some $\kappa \in \mathcal{K}$
  and some $\zeta \in (1,2)$.  By assumption, there exists $\eta_0>0$ such
  that $g$ is strictly positive and differentiable on $(0,\eta_0)$.
  Then for any $\eta \in (0,\eta_0)$, we have
  \begin{equation*}
    \int_{0}^{\eta} \frac{u^{\frac{d}{2}} g(u)}{\int_0^{\zeta u}
      v^{\frac{d}{2}} g(v) \, dv} \, du < \infty \;.
  \end{equation*}
\end{lemma}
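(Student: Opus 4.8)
The plan is to show that, near the origin, the integrand is dominated by a constant multiple of $1/\kappa(u)$. Since $\kappa \in \mathcal{K}$ means $1/\kappa$ is integrable near $0$, and since the integrand is manifestly bounded on compact subintervals of $(0,\eta_0)$, integrability on $(0,\eta)$ will then follow by splitting the interval. To set this up, I would write $G(u) = \int_0^u v^{\frac{d}{2}} g(v)\, dv$, so that the numerator of the integrand is $G'(u) = u^{\frac{d}{2}} g(u)$ and the denominator is $G(\zeta u)$. The first defining condition of $\mathcal{C}(\kappa,\zeta)$ makes $v^{\frac{d}{2}} g(v)$ bounded, hence locally integrable, near $0$, which legitimizes the fundamental theorem of calculus here and gives $G(0^+) = 0$. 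The object to control is therefore $\kappa(u)\, u^{\frac{d}{2}} g(u) / G(\zeta u)$: if this has a finite limit as $u \to 0$, then $u^{\frac{d}{2}} g(u)/G(\zeta u) \le C/\kappa(u)$ on some $(0,\eta_1)$, which is exactly the bound I want.

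The key step, and the main obstacle, is evaluating $\lim_{u\to 0} \kappa(u)\, u^{\frac{d}{2}} g(u) / G(\zeta u)$ by L'H\^opital's rule. Both numerator and denominator tend to $0$: the numerator by the second condition defining $\mathcal{C}(\kappa,\zeta)$, and the denominator because $G(0^+)=0$. The denominator is differentiable with $\frac{d}{du} G(\zeta u) = \zeta^{1+\frac{d}{2}} u^{\frac{d}{2}} g(\zeta u)$, which is strictly positive near $0$ since $g>0$ there, so the L'H\^opital hypotheses hold on a small interval. Differentiating the numerator by the product rule and cancelling the common factor $u^{\frac{d}{2}}$, the ratio of derivatives becomes $\zeta^{-(1+\frac{d}{2})}$ times $\left(\kappa'(u) + \frac{d}{2}\frac{\kappa(u)}{u}\right)\frac{g(u)}{g(\zeta u)} + \frac{\kappa(u) g'(u)}{g(\zeta u)}$, whose two summands converge, by the two limits in \eqref{eq:twocs}, to $l_1$ and $l_2$ respectively. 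Hence the ratio of derivatives tends to $(l_1+l_2)/\zeta^{1+\frac{d}{2}}$, and L'H\^opital delivers this same finite value for the original ratio. The genuine care here is in verifying the L'H\^opital hypotheses (both limits zero, denominator derivative nonvanishing near $0$) and in correctly applying the chain rule to $G(\zeta u)$; once those are in place, conditions 2 and 3 of $\mathcal{C}(\kappa,\zeta)$ do all the work.

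Finally, I would assemble the bound. Choose $\eta_1 \in (0,\eta_0)$ small enough that $\kappa(u)\, u^{\frac{d}{2}} g(u)/G(\zeta u) \le C$ on $(0,\eta_1)$ and that $\int_0^{\eta_1} \frac{1}{\kappa(u)}\, du < \infty$, the latter being possible precisely because $\kappa \in \mathcal{K}$. Then $\int_0^{\eta_1} \frac{u^{\frac{d}{2}} g(u)}{G(\zeta u)}\, du \le C \int_0^{\eta_1} \frac{1}{\kappa(u)}\, du < \infty$. On the remaining interval $[\eta_1,\eta]$ (nonempty only when $\eta > \eta_1$), the numerator $u^{\frac{d}{2}} g(u)$ is continuous, hence bounded, while $G(\zeta u) \ge G(\zeta \eta_1) > 0$ because $g>0$ on $(0,\eta_0)$; thus the integrand is bounded on a compact set and contributes a finite amount. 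Adding the two pieces yields $\int_0^\eta \frac{u^{\frac{d}{2}} g(u)}{G(\zeta u)}\, du < \infty$, which is the assertion of the lemma.
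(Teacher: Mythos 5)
Your proposal is correct and follows essentially the same route as the paper's own proof: the same L'H\^{o}pital computation showing $\lim_{u \to 0} \kappa(u)\, u^{\frac{d}{2}} g(u) / \int_0^{\zeta u} v^{\frac{d}{2}} g(v)\, dv = (l_1+l_2)/\zeta^{\frac{d}{2}+1}$ via the two limits in \eqref{eq:twocs}, the resulting bound by a constant multiple of $1/\kappa(u)$ near the origin, and the same splitting of the integral with the tail piece handled by continuity and positivity of the denominator. Your explicit verification of the L'H\^{o}pital hypotheses (both limits zero, nonvanishing derivative of the denominator) is a minor refinement the paper leaves implicit, but the argument is the same.
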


\begin{proof}
  Since $u^{\frac{d}{2}} g(u)$ is bounded in a neighborhood of the
  origin, we have
\[
\lim_{u \rightarrow 0} \int_0^{\zeta u} v^{\frac{d}{2}} g(v) \, dv = 0 \;.
\]
Hence, an application of L'H\^{o}pital's rule yields
\begin{align}
   \label{eq:L1}
   \lim_{u \rightarrow 0} \frac{\kappa(u)
     u^{\frac{d}{2}}g(u)}{\int_0^{\zeta u} v^{\frac{d}{2}} g(v) \, dv} & =
   \lim_{u \rightarrow 0} \frac{\big[ \kappa'(u) u^{\frac{d}{2}} +
     \kappa(u) \frac{d}{2} u^{\frac{d}{2}-1} \big]g(u) + \kappa(u)
     u^{\frac{d}{2}} g'(u)}{\zeta(\zeta u)^{\frac{d}{2}} g(\zeta u)} \notag \\ & =
   \frac{1}{\zeta^{\frac{d}{2}+1}} \Bigg \{ \lim_{u \rightarrow 0} \bigg(
   \kappa'(u) + \frac{d}{2} \frac{\kappa(u)}{u} \bigg)
   \frac{g(u)}{g(\zeta u)} + \lim_{u \rightarrow 0}
   \frac{\kappa(u)g'(u)}{g(\zeta u)} \Bigg \} \notag \\ & =
   \frac{l_1+l_2}{\zeta^{\frac{d}{2}+1}} \ge 0 \;.
 \end{align}
 Put $l_3 = (l_1+l_2)/\zeta^{\frac{d}{2}+1}$. It follows from
 \eqref{eq:L1} that for any $\eta \in (0,\eta_0)$, there exists
 $0<\eta_1<\eta$ such that
\[
\frac{u^{\frac{d}{2}}g(u)}{\int_0^{\zeta u} v^{\frac{d}{2}} g(v) \, dv} \le 
\frac{l_3+1}{\kappa(u)}
\]
whenever $u \in (0,\eta_1)$.  Then, since $\kappa \in \mathcal{K}$,
there exists $\eta_2 \in (0,\eta_1)$ such that
\[
\int_0^{\eta_2} \frac{1}{\kappa(u)} \, du < \infty \;.
\]
Furthermore, since $g$ is continuous on $[\eta_2,\eta]$,
$\int_{\eta_2}^\eta u^{\frac{d}{2}} g(u) \, du < \infty$.  Putting all
of this together, we have for any $\eta < \eta_0$,
\begin{align*}
  \int_{0}^{\eta} \frac{u^{\frac{d}{2}} g(u)}{\int_0^{\zeta u}
    v^{\frac{d}{2}} g(v) \, dv} \, du & = \int_{0}^{\eta_2}
  \frac{u^{\frac{d}{2}} g(u)}{\int_0^{\zeta u} v^{\frac{d}{2}} g(v) \, dv}
  \, du + \int_{\eta_2}^{\eta} \frac{u^{\frac{d}{2}} g(u)}{\int_0^{\zeta u}
    v^{\frac{d}{2}} g(v) \, dv} \, du \\ & \le \int_{0}^{\eta_2}
  \frac{l_3+1}{\kappa(u)} \, du + \frac{\int_{\eta_2}^{\eta}
    u^{\frac{d}{2}} g(u) \, du}{\int_0^{\zeta\eta_2} v^{\frac{d}{2}} g(v)
    \, dv} \\ & < \infty \;.
\end{align*}
\end{proof}

\begin{proof}[Proof of Theorem~\ref{thm:main}]
  Assume that $h$ is zero near the origin, and define $\eta_0 = \sup
  \big \{\eta \in \mathbb{R}_+: \int_0^\eta u^{\frac{d}{2}}h(u) \, du
  = 0 \big \}$.  Clearly, $J := \int_0^{\frac{3\eta_0}{2}}
  u^{\frac{d}{2}} h(u) \, du >0$.  Now, for $s \in [0,\infty)$, we
  have
\[
\int_\mathbb{R_+} v^{\frac{d}{2}} e^{-\frac{sv}{2}} h(v) \, dv \ge
\int_0^{\frac{3\eta_0}{2}} v^{\frac{d}{2}} e^{-\frac{sv}{2}} h(v) \,
dv \ge J e^{-\frac{3 \eta_0s}{4}} \;.
\]
Therefore, for $u \in \mathbb{R}_+$ and $s \in [0,\infty)$, we have
\[
\psi(u;s) = \frac{u^{\frac{d}{2}} \, e^{ -\frac{s u}{2}} \,
  h(u)}{\int_{\mathbb{R}_+} v^{\frac{d}{2}} \, e^{ -\frac{s v}{2}} \,
  h(v) \, dv} \le J^{-1} u^{\frac{d}{2}} h(u) \, e^{-\frac{s u}{2} +
  \frac{3 \eta_0s}{4}} \;.
\]
Now, by considering $u \ge \eta_0$ and $u < \eta_0$ separately, we can
see that
\[
\psi(u;s) \le J^{-1} u^{\frac{d}{2}} h(u) \, e^{\frac{s u}{4}}
\]
for all $u \in \mathbb{R}_+$ and all $s \in [0,\infty)$.  Hence,
\eqref{eq:main} of Lemma~\ref{lem:suff} holds with $\zeta=3/2$ and $\nu(u)
= J^{-1}u^{\frac{d}{2}} h(u)$, so the result follows.

We now prove that the second condition is sufficient.  Assume that
there exists $g \in \mathcal{C}(\kappa,\zeta)$ such that $\lim_{u
  \rightarrow 0} \frac{h(u)}{g(u)} = l \in \mathbb{R}_+$.  Then by
Lemma~\ref{lem:suff3}, there exists $\eta>0$ such that
\[
\int_0^\eta \frac{u^{\frac{d}{2}} \, g(u)}{\int_0^{\zeta u} v^{\frac{d}{2}}
  \, g(v) \, dv} \, du < \infty \;,
\]
and such that
\[
\frac{l}{2} \le \frac{h(u)}{g(u)} \le 2l
\]
whenever $u \in (0,\zeta\eta)$.  It follows that
\[
\int_0^\eta \frac{u^{\frac{d}{2}} \, h(u)}{\int_0^{\zeta u} v^{\frac{d}{2}}
  \, h(v) \, dv} \, du \le 4 \int_0^\eta \frac{u^{\frac{d}{2}} \,
  g(u)}{\int_0^{\zeta u} v^{\frac{d}{2}} \, g(v) \, dv} \, du \;,
\]
and the result follows from Lemma~\ref{lem:suff2}.

Finally, we prove that the third condition is sufficient.  Note first
that we may assume that $h$ is not zero near the origin, since,
otherwise, the result follows immediately from condition (1).  Assume
that there exists $g \in \mathcal{C}(\kappa,\zeta)$ such that
$u^{\frac{d}{2}} g(u)$ and $\frac{h(u)}{g(u)}$ are both non-decreasing
near the origin.  By Lemma~\ref{lem:suff3}, there exists $\eta'>0$
such that
\[
\int_0^{\eta'} \frac{u^{\frac{d}{2}} \, g(u)}{\int_0^{\zeta u}
  v^{\frac{d}{2}} \, g(v) \, dv} \, du < \infty \;.
\]
Now let $\eta \in (0, \eta')$ be such that $g$ and $h$ are both
strictly positive for $u \in (0,\eta)$, and $u^{\frac{d}{2}} g(u)$ and
$\frac{h(u)}{g(u)}$ are both non-decreasing in that interval.  For $u
\in (0,\eta)$, let $t(u) = h(u)/g(u)$.  For any $u \in (0,\eta/\zeta)$, we
have
\begin{equation}
  \label{eq:pf1}
  \int_u^{\zeta u} v^{\frac{d}{2}} \, h(v) \, dv \ge \int_u^{\zeta u}
  v^{\frac{d}{2}} \, t(u) g(v) \, dv  \;,
\end{equation}
since $t$ is non-decreasing.  If $v \in [u,\zeta u]$, then $v \ge
\frac{\zeta(v-u)}{\zeta-1} \ge 0$.  For any $u \in (0,\eta/\zeta)$, we have
\begin{equation}
  \label{eq:pf2}
  \int_u^{\zeta u} v^{\frac{d}{2}} \, g(v) \, dv \ge \int_u^{\zeta u} \Big[
  \frac{\zeta(v-u)}{\zeta-1} \Big]^{\frac{d}{2}} \, g \Big( \frac{\zeta(v-u)}{\zeta-1}
  \Big) \, dv = \frac{\zeta-1}{\zeta} \int_0^{\zeta u} w^{\frac{d}{2}} \, g(w) \,
  dw\;.
\end{equation}
It follows from \eqref{eq:pf1} and \eqref{eq:pf2} that, for $u \in
(0,\eta/\zeta)$, we have
\[
\frac{u^{\frac{d}{2}} \, h(u)}{\int_0^{\zeta u} v^{\frac{d}{2}} \, h(v) \,
  dv} \le \frac{u^{\frac{d}{2}} \, h(u)}{\int_u^{\zeta u} v^{\frac{d}{2}}
  \, h(v) \, dv} \le \frac{u^{\frac{d}{2}} \, t(u) g(u)}{\big(
  \frac{\zeta-1}{\zeta} \big) t(u) \int_0^{\zeta u} v^{\frac{d}{2}} \, g(v) \, dv}
= \frac{\zeta}{\zeta-1} \frac{u^{\frac{d}{2}} \, g(u)}{ \int_0^{\zeta u}
  v^{\frac{d}{2}} \, g(v) \, dv} \;.
\]
Hence,
\[
\int_0^{\frac{\eta}{\zeta}} \frac{u^{\frac{d}{2}} \, h(u)}{\int_0^{\zeta u}
  v^{\frac{d}{2}} \, h(v) \, dv} \, du \le \frac{\zeta}{\zeta-1}
\int_0^{\frac{\eta}{\zeta}} \frac{u^{\frac{d}{2}} \, g(u)}{ \int_0^{\zeta u}
  v^{\frac{d}{2}} \, g(v) \, dv} \, du \le \frac{\zeta}{\zeta-1}
\int_0^{\eta'} \frac{u^{\frac{d}{2}} \, g(u)}{ \int_0^{\zeta u}
  v^{\frac{d}{2}} \, g(v) \, dv} \, du < \infty \;,
\]
and the result follows from Lemma~\ref{lem:suff2}.
\end{proof}

\section{Mixing densities that are faster than polynomial near the
  origin}
\label{sec:ftp}

In this section, we provide details to back the claims made in the
Introduction.  We begin by using Corollary~\ref{cor:main} to show
that, if $h$ is log-normal, generalized inverse Gaussian, or
Fr\'{e}chet, then the DA Markov operator is trace-class.  We then
provide an example of a mixing density that is faster than polynomial
near the origin, but for which \eqref{eq:fail} holds.  Again, this
shows that our result is not applicable to this mixing density.

Let $h$ be a $\mbox{GIG}(v,\alpha,\gamma)$ density, so that
\begin{equation*}
  h(u) = b \, u^{v-1} \exp \Big \{ -
  \frac{1}{2} \Big(\alpha u + \frac{\gamma}{u} \Big) \Big \}
  I_{\mathbb{R}_+}(u) \;,
\end{equation*}
where $\alpha,\gamma \in \mathbb{R}_+$, $v \in \mathbb{R}$, and
$b=b(v,\alpha,\gamma)$ is the normalizing constant.  It's easy to see
that conditions ($S3$) \& ($S4$) hold for all members of this family.
Taking $\rho=1$ and $\tau=v-1$ in Corollary~\ref{cor:main}, we have
\begin{align*}
  \frac{d}{du} \frac{h(u)}{g_{\rho,\tau}(u)} = & b \frac{d}{du} \exp
  \Big\{ -\frac{\alpha u}{2} - \frac{\gamma}{2u} + (\log u)^2 \Big\}
  \\ & = b \Big( -\frac{\alpha}{2} + \frac{\gamma}{2u^2} + \frac{2\log
    u}{u} \Big) \exp \Big\{ -\frac{\alpha u}{2} - \frac{\gamma}{2u} +
  (\log u)^2 \Big\} \;,
\end{align*}
which is clearly non-negative in a neighborhood of 0.  Thus, $K$ is
trace-class.

Suppose $h$ is a $\mbox{Fr\'{e}chet}(\alpha,\gamma)$ density, i.e., 
\begin{equation*}
  h(u) = b \, u^{-(\alpha+1)} \, e^{-\frac{\gamma^\alpha}{u^\alpha}} \, 
  I_{\mathbb{R}_+}(u) \;,
\end{equation*}
where $\alpha,\gamma>0$, and $b=b(\alpha,\gamma)$ is the normalizing
constant.  Assume that $\alpha>d/2$ so that condition ($S3$) holds.
Taking $\rho=1$ and $\tau=-(\alpha+1)$, we have
\[
\frac{d}{du} \frac{h(u)}{g_{\rho,\tau}(u)} = b \frac{d}{du} \exp
\Big\{ -\frac{\gamma^\alpha}{u^\alpha} + (\log u)^2 \Big\} = b \bigg(
\frac{\alpha \gamma^\alpha}{u^{\alpha+1}} + \frac{2 \log u}{u} \bigg)
\exp \Big\{ -\frac{\gamma^\alpha}{u^\alpha} + (\log u)^2 \Big\} \;,
\]
which is clearly non-negative in a neighborhood of 0, so $K$ is
trace-class.

Finally, let $h$ be a $\mbox{Log-normal}(\mu,\gamma)$ density, so that
\begin{equation*}
  h(u) = \frac{b}{u} \exp \Big \{ -\frac{1}{2 \gamma} \Big( \log u -
  \mu)^2 \Big) \Big \} I_{\mathbb{R}_+}(u) \;,
\end{equation*}
where $\gamma>0$, $\mu \in \mathbb{R}$ and $b=b(\gamma,\mu)$ is the
normalizing constant.  Every member of this family satisfies
conditions ($S3$) \& ($S4$).  Taking $\rho = \frac{1}{2 \gamma} $ and
$\tau = \frac{\mu}{\gamma}-1$, we have
\[
\frac{h(u)}{g_{\rho,\tau}(u)} = b \, e^{-\frac{\mu^2}{2\gamma}} \;,
\]
and the result follows.

We end this section by showing that there exist mixing densities that
are faster than polynomial near the origin, but are not in the domain
of application of Theorem~\ref{thm:main}.  Consider the following
mixing density
\[
h(u) = b \exp \Big \{ (\log u) \log(-\log u) - \Big( \frac{d}{2} +1
\Big) \log u \Big \} I_{(0,1)}(u) \;,
\]
where $b=b(d)$ is the normalizing constant.  For any real $c$, we have
\begin{align*}
  \frac{d}{du} \frac{h(u)}{u^c} & = \bigg \{ \frac{d}{du} \bigg[ (\log
  u) \log(-\log u) - \Big( c + \frac{d}{2} +1 \Big) \log u \bigg]
  \bigg \} \frac{h(u)}{u^c} \\ & = \bigg[ \frac{1}{u} \log(-\log u) -
  \frac{( d + 2c)}{2u} \bigg] \frac{h(u)}{u^c} \;.
\end{align*}
When $u>0$ is small, $\frac{d}{du} \frac{h(u)}{u^c}>0$, so $h(u)$ is
indeed faster than polynomial near the origin.  We now show that
\eqref{eq:fail} holds.  Define
\[
\nu_h(u) = \frac{u^{\frac{d}{2}} h(u)}{\int_0^{2u} v^{\frac{d}{2}}
  h(v) \, dv} \;,
\]
and let $\phi(u) = -u (\log u) I_{(0,1)}(u)$.  An application of
L'H\^{o}pital's rule yields
\begin{align*}
  \lim_{u \rightarrow 0} \phi(u) \nu_h(u) & = \lim_{u \rightarrow 0}
  \frac{ \frac{d}{du} \big[ \phi(u) u^{\frac{d}{2}} h(u)
    \big]}{2(2u)^{\frac{d}{2}} h(2u)} \\ & = \lim_{u \rightarrow 0}
    \bigg[ \phi'(u) + \frac{\phi(u) \log(-\log u)}{u} \bigg]
    \frac{u^{\frac{d}{2}} h(u)}{2(2u)^{\frac{d}{2}} h(2u)} \;.
\end{align*}
Now
\[
\phi'(u) + \frac{\phi(u) \log(-\log u)}{u} = -(\log u) \log(-\log
u) -\log u - 1 \;,
\]
and
\begin{align*}
  \frac{u^{\frac{d}{2}} h(u)}{2(2u)^{\frac{d}{2}} h(2u)} & =
  \frac{1}{2} \exp \Big \{ (\log u) \log(-\log u) -\log u - \log[-\log
  (2u)] \log (2u) + \log (2u) \Big \} \\ & = \exp \bigg \{ -(\log 2)
  \log[-\log (2u)] + (\log u) \log \frac{\log u}{\log u + \log 2}
  \bigg \} \\ & = \frac{\exp \big \{ (\log u) \log \frac{\log u}{\log
      u + \log 2} \big \}}{(-\log 2 -\log u)^{\log 2}} \;.
\end{align*}
Thus,
\[
\lim_{u \rightarrow 0} \phi(u) \nu_h(u) = \lim_{u \rightarrow 0}
\frac{-(\log u) \log(-\log u) -\log u - 1}{(-\log 2 -\log u)^{\log 2}}
\exp \bigg \{ (\log u) \log \frac{\log u}{\log u + \log 2} \bigg \} \;.
\]
It is straightforward to show that
\[
\lim_{u \rightarrow 0} \, (\log u) \log \frac{\log u}{\log u + \log 2} =
- \log 2 \;,
\]
and that
\[
\lim_{u \rightarrow 0} \frac{-(\log u) \log(-\log u) -\log u -
  1}{(-\log 2 -\log u)^{\log 2}} = \infty \;.
\]
Therefore, 
\[
\lim_{u \rightarrow 0} \phi(u) \nu_h(u) = \infty \;.
\]
It follows that, for all $\eta$ in a small neighborhood of the origin,
we have
\[
\int_0^\eta \nu_h(u) \, du = \int_0^\eta \frac{\phi(u)
  \nu_h(u)}{\phi(u)} \, du \ge \int_0^\eta \frac{1}{\phi(u)} \, du =
\infty \;.
\]

\section{The Haar PX-DA algorithm}
\label{sec:Haar}

The Haar PX-DA algorithm always exists in the special case where $a =
\frac{d+1}{2}$, but, outside of this case, its existence requires an
additional regularity condition.  Indeed, to define the Haar PX-DA
algorithm, we must assume that
\begin{equation}
  \label{eq:H}
  \int_0^\infty t^{n+\frac{(d+1-2a)d}{2}-1} \, \Bigg[ \prod_{i=1}^n h(t z_i) \Bigg] \,
  dt < \infty
\end{equation}
for (almost) all $z \in \mathbb{R}_+^n$.  HJK\&Q show that
\eqref{eq:H} holds if
\begin{equation}
  \label{eq:Ha}
\int_0^\infty u^{\frac{(d+1-2a)d}{2}} \, h(u) \, du < \infty \;.
\end{equation}
Note that \eqref{eq:Ha} is automatic when $a = \frac{d+1}{2}$.  Now
assume that \eqref{eq:H} holds, and define another parametric family
of univariate density functions given by
\[
e(v;z) = \frac{v^{n-1} \, \prod_{i=1}^n h(v z_i) \,
  I_{\mathbb{R}_+}(v)}{\int_0^\infty t^{n-1} \, \prod_{i=1}^n h(t z_i)
  \, dt} \;.
\]
Let $\Phi^* = \{(\beta^*_m,\Sigma^*_m)\}_{m=0}^\infty$ denote the Haar
PX-DA Markov chain.  If the current state of the chain is $(\beta^*_m,
\Sigma^*_m) = (\beta,\Sigma)$, then we simulate the new state,
$(\beta^*_{m+1}, \Sigma^*_{m+1})$, using the following four-step
procedure.

\baro \vspace*{2mm}
\noindent {\rm Iteration $m+1$ of the Haar PX-DA algorithm:}
\begin{enumerate}
\item Draw $\{Z'_i\}_{i=1}^n$ independently with $Z'_i \sim \psi \Big(
  \cdot \;; \big( \beta^T x_i - y_i \big)^T \Sigma^{-1} \big( \beta^T x_i -
  y_i \big) \Big)$, and call the result $z' = (z'_1,\dots,z'_n)$.
\item Draw $V \sim e(\cdot \, ;z')$, call the result $v$, and set $z =
  (vz'_1,\dots,vz'_n)^T$.
\item Draw
\[
\Sigma^*_{m+1} \sim \mbox{IW}_d \bigg( n-p+2a-d-1, \Big( y^T Q^{-1} y -
  \mu^T \Omega^{-1} \mu \Big)^{-1} \bigg)  \;.
\]
\item Draw $\beta^*_{m+1} \sim \mbox{N}_{p,d} \big( \mu, \Omega,
  \Sigma^*_{m+1} \big)$ \vspace*{-2.5mm}
\end{enumerate}
\barba

The following result is a direct consequence of Theorem~\ref{thm:main}
and Theorems 1 and 2 from \citet{khar:hobe:2011}.

\begin{corollary}
  \label{cor:Haar}
  Let $h$ be a mixing density such that \eqref{eq:H} holds.  Let $K$
  and $K^*$ denote the Markov operators associated with the DA and
  Haar PX-DA Markov chains, respectively.  Assume that one of the
  three conditions of Theorem~\ref{thm:main} holds.  Then $K^*$ is
  trace-class.  Moreover, letting $\{\lambda_i\}_{i=1}^\infty$ and
  $\{\lambda^*_i\}_{i=1}^\infty$ denote the ordered eigenvalues of $K$
  and $K^*$, respectively, we have that $0 \le \lambda^*_i \le
  \lambda_i < 1$ for all $i \in \mathbb{N}$, and $\lambda^*_i <
  \lambda_i$ for at least one $i \in \mathbb{N}$.
\end{corollary}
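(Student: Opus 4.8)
The plan is to derive the corollary by combining Theorem~\ref{thm:main} with the operator-theoretic comparison of data augmentation algorithms and their sandwich variants developed in \citet{khar:hobe:2011}. The first step is bookkeeping: under the standing assumption that one of the three conditions of Theorem~\ref{thm:main} holds, that theorem immediately yields that the DA operator $K$ is trace-class. Since a trace-class operator is in particular compact, and since $K$ is self-adjoint and positive, its ordered eigenvalues $\{\lambda_i\}_{i=1}^\infty$ all lie in $[0,1)$, as already noted in the Introduction; this secures the bound $\lambda_i<1$ for every $i$, and $\sum_i \lambda_i = \trace(K) < \infty$.

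The crux is to recognize the Haar PX-DA chain as a sandwich variant of the DA chain in the precise sense required by \citet{khar:hobe:2011}. Let $A$ denote the operator sending $f \in L_0^2$ to the conditional expectation $E[f(\beta,\Sigma)\mid Z=z]$, viewed as an element of $L^2(\pi(z|y))$. The DA operator then factors as $K = A^* A$, which is just the operator form of the two conditional draws in Steps~1--3 of the DA algorithm. The Haar PX-DA algorithm differs only through the extra Step~2, in which the latent vector $z$ is rescaled by a draw $V \sim e(\cdot\,;z)$; condition \eqref{eq:H} is exactly what makes $e(\cdot\,;z)$ a proper density, so this step is well defined. By the construction of \citet{roy:hobe:2010} (as extended by HJK\&Q), this step defines a Markov operator $R$ on $L^2(\pi(z|y))$ that is reversible with respect to the $z$-marginal $\pi(z|y)$ and, being built from the Haar measure of the scale group, is positive semidefinite — in fact idempotent — so that $0 \preceq R \preceq I$ and $R \neq I$. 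Consequently the Haar PX-DA operator factors as $K^* = A^* R A$.

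With this identification in hand, Theorems~1 and~2 of \citet{khar:hobe:2011} apply directly. Their Theorem~1 shows that sandwiching a positive contraction $R$ between $A$ and $A^*$ preserves the trace-class property, giving $\trace(K^*)=\sum_i \lambda_i^* \le \sum_i \lambda_i = \trace(K)<\infty$, so $K^*$ is trace-class. Their Theorem~2 supplies the eigenvalue comparison: because $\langle K^* f, f\rangle = \langle R(Af),Af\rangle \le \langle Af,Af\rangle = \langle Kf,f\rangle$ whenever $R \preceq I$, the operator inequality $A^* R A \preceq A^* A$ holds, and the Courant--Fischer min--max characterization of the eigenvalues of compact positive self-adjoint operators then yields $0 \le \lambda_i^* \le \lambda_i$ for all $i$. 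The strict improvement $\lambda_i^* < \lambda_i$ for at least one $i$ follows from the nontriviality of $R$ (the rescaling step genuinely moves $z$, so $R$ is a proper projection rather than the identity), which is precisely the hypothesis under which Theorem~2 certifies a strict gap.

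The routine part is the trace-class conclusion and the eigenvalue bookkeeping. The step that I expect to require genuine care, and that I would verify explicitly, is confirming that Step~2 induces an operator $R$ of exactly the form demanded by \citet{khar:hobe:2011} — namely that $R$ is reversible with respect to $\pi(z|y)$ and positive semidefinite, so that both the operator inequality $A^* R A \preceq A^* A$ and the nondegeneracy needed for a strict eigenvalue gap are in force. This reduces to checking that the density $e(v;z)$, defined through the scale-group Haar measure under \eqref{eq:H}, realizes the Haar PX-DA operator as a bona fide sandwich operator in their sense; once that matching is confirmed, the corollary follows as a direct consequence of Theorem~\ref{thm:main} and their two theorems.
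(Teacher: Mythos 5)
Your proposal is correct and takes essentially the same route as the paper: the paper offers no separate argument for this corollary, obtaining it exactly as you do by combining Theorem~\ref{thm:main} (which makes $K$ trace-class) with Theorems 1 and 2 of \citet{khar:hobe:2011}, whose sandwich framework --- $K = A^*A$ and $K^* = A^*RA$ with $R$ an idempotent Markov operator reversible with respect to $\pi(z|y)$, as arises from the Haar-measure rescaling step made well defined by \eqref{eq:H} --- is precisely what you reconstruct. The one point to watch is that the strict inequality $\lambda_i^* < \lambda_i$ for some $i$ rests in \citet{khar:hobe:2011} on a nondegeneracy condition slightly stronger than $R \neq I$ (roughly, $R$ must act nontrivially on the range of $A$), but since you, like the paper, delegate that verification to their Theorem 2, your argument matches the paper's.
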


\vspace*{5mm}

\noindent {\bf \large Acknowledgment}.  The second author was
supported by NSF Grant DMS-15-11945.

\vspace*{8mm}

\noindent {\LARGE \bf Appendices}
\begin{appendix}

\vspace*{-3mm}

\section{Matrix Normal and Inverse Wishart Densities}
\label{app:dist}

\begin{description}
\item[Matrix Normal Distribution] Suppose $Z$ is an $r \times c$
  random matrix with density
\[
f_{Z}(z) = \frac{1}{(2\pi)^{\frac{rc}{2}} |A|^{\frac{c}{2}}
  |B|^{\frac{r}{2}}} \exp \bigg[ -\frac{1}{2}\mbox{tr} \Big\{ A^{-1}(z
  - \theta) B^{-1} (z - \theta)^T \Big\} \bigg] \;,
\]
where $\theta$ is an $r \times c$ matrix, $A$ and $B$ are $r \times r$
and $c \times c$ positive definite matrices.  Then $Z$ is said to have
a \textit{matrix normal distribution} and we denote this by $Z \sim
\mbox{N}_{r,c} (\theta,A,B)$ \citep[][Chapter 17]{arno:1981}.

\item[Inverse Wishart Distribution] Suppose $W$ is an $r \times r$
  random positive definite matrix with density
\[
f_{W}(w) = \frac{|w|^{-\frac{m+r+1}{2}} \exp \Big \{ -\frac{1}{2}
  \mbox{tr} \big( \Theta^{-1} w^{-1} \big) \Big\}}{ 2^{\frac{mr}{2}}
  \pi^{\frac{r(r-1)}{4}} |\Theta|^{\frac{m}{2}} \prod_{i=1}^r \Gamma
  \big( \frac{1}{2}(m+1-i) \big)} I_{{\cal S}_r}(W) \;,
\]
where $m > r-1$ and $\Theta$ is an $r \times r$ positive definite
matrix.  Then $W$ is said to have an \textit{inverse Wishart
  distribution} and this is denoted by $W \sim \mbox{IW}_r(m,
\Theta)$.
\end{description}

\section{Proof of Proposition~\ref{prop:pf}}
\label{app:proof}

\begin{proof}[Proof of Proposition~\ref{prop:pf}]
  Fix $(\rho,\tau) \in \mathbb{R}_+ \times \mathbb{R}$.  First, that
  $u^{\frac{d}{2}} g_{\rho,\tau}(u)$ is non-decreasing in a
  neighborhood of the origin is obvious. To show that $g_{\rho,\tau}
  \in \mathcal{C}(\kappa,3/2)$ with $\kappa(u) = u (\log u)^2$, we
  demonstrate that $g_{\rho,\tau}$ satisfies the three conditions that
  define $\mathcal{C}(\kappa,3/2)$. Clearly $\lim\limits_{u
    \rightarrow 0} g_{\rho,\tau}(u) = 0$, hence $u^{\frac{d}{2}}
  g_{\rho,\tau}(u)$ is bounded in a neighborhood of the
  origin. Moreover,
  \[
\lim\limits_{u \rightarrow 0} \kappa(u) u^{\frac{d}{2}}
g_{\rho,\tau}(u) =
\lim\limits_{u \rightarrow 0} u(\log u)^2 u^{\frac{d}{2}}
g_{\rho,\tau}(u) = 0.
\]
Now, note that
\begin{align*}
  & \lim_{u \rightarrow 0} \Big( \kappa'(u) +
  \frac{d}{2}\frac{\kappa(u)}{u}
  \Big) \frac{g_{\rho,\tau}(u)}{g_{\rho,\tau}(3u/2)} \\
  & = \lim_{u \rightarrow 0} \Big[ \frac{d+2}{2} (\log u)^2 + 2\log u
  \Big] \frac{g_{\rho,\tau}(u)}{g_{\rho,\tau}(3u/2)} \\
  & = \lim_{u \rightarrow 0} \Big[ \frac{d+2}{2} + \frac{2}{\log u}
  \Big] \lim_{u \rightarrow 0} \bigg[ (\log u)^2 \exp \Big\{ \rho
  \Big(\log \frac{3u}{2}\Big)^2 - \tau \log\frac{3u}{2} - \rho(\log
  u)^2 + \tau \log u \Big\} \bigg] \\ & = \frac{d+2}{2} \exp \Big\{
  \log \frac{3}{2} \Big( \rho \log \frac{3}{2} - \tau \Big) \Big\}
  \lim_{u \rightarrow 0} \Big[ (\log u)^2 u^{2\rho\log\frac{3}{2}}
  \Big] \\ & = 0 \;,
\end{align*}
and
\begin{align*}
  \lim_{u \rightarrow 0} \frac{\kappa(u)
    g'_{\rho,\tau}(u)}{g_{\rho,\tau}(3u/2)} &= \lim_{u \rightarrow 0}
  \frac{u(\log u)^2 g'_{\rho,\tau}(u)}{g_{\rho,\tau}(3u/2)}\\
  & = \lim_{u \rightarrow 0} \bigg[ u (\log u)^2 \Big( \frac{\tau -
    2\rho \log u}{u} \Big )
  \frac{g_{\rho,\tau}(u)}{g_{\rho,\tau}(3u/2)} \bigg] \\ & = \exp
  \Big\{ \log \frac{3}{2} \Big( \rho \log \frac{3}{2} - \tau \Big)
  \Big\} \lim_{u \rightarrow 0} \Big[ ( \tau - 2\rho \log u) (\log
  u)^2 u^{2\rho\log\frac{3}{2}} \Big] \\ & = 0 \;.
\end{align*}
It follows that \eqref{eq:twocs} holds with $\zeta=3/2$ and $l_1=l_2=0$.
Thus $g_{\rho,\tau} \in \mathcal{C}(\kappa,3/2)$.
\end{proof}

\end{appendix}

\bibliographystyle{ims}
\bibliography{refs}

\end{document}